\pdfoutput=1
\documentclass[a4paper,11pt,oneside]{amsart}
\usepackage[paperheight=279mm,paperwidth=18cm,textheight=26cm,textwidth=14cm,includehead]{geometry}
\usepackage{amsfonts,amssymb,amsmath,amsthm}
\usepackage[T1]{fontenc}
\usepackage[utf8]{inputenc}
\usepackage{mathtools}
\usepackage[unicode]{hyperref}
\hypersetup{
bookmarks=true,
colorlinks=true,
citecolor=[rgb]{0,0,0.5},
linkcolor=[rgb]{0,0,0.5},
urlcolor=[rgb]{0,0,0.75},
pdfpagemode=UseNone,
pdfstartview=FitH,
pdfdisplaydoctitle=true,
pdftitle={Jump inequalities via real interpolation},
pdfauthor={Mirek, Stein, Zorin-Kranich},
pdflang=en-US
}

\usepackage[style=alphabetic,maxalphanames=4]{biblatex}
\addbibresource{poly-jump.bib}

\numberwithin{equation}{section}
\newtheorem{theorem}[equation]{Theorem}
\newtheorem{proposition}[equation]{Proposition}
\newtheorem{lemma}[equation]{Lemma}
\newtheorem{corollary}[equation]{Corollary}
\theoremstyle{definition}

\theoremstyle{remark}

\newcommand{\R}{\mathbb{R}}
\newcommand{\N}{\mathbb{N}}
\newcommand{\Z}{\mathbb{Z}}
\newcommand{\E}{\mathbb{E}}

\newcommand{\dif}{\mathrm{d}}
\let\rho\varrho
\let\epsilon\varepsilon

\renewcommand{\C}{\mathbb{C}}
\newcommand{\calB}{\mathcal{B}}
\newcommand{\calE}{\mathcal{E}}
\newcommand{\calG}{\mathcal{G}}
\newcommand{\calR}{\mathcal{R}}
\newcommand{\calJ}{\mathcal{J}}
\newcommand{\calN}{\mathcal{N}}
\newcommand{\bbI}{\mathbb{I}}
\newcommand{\FT}{\mathcal{F}} 
\newcommand{\frakm}{\mathfrak{m}}
\newcommand{\frakf}{\mathfrak{f}}
\DeclareMathOperator{\id}{id}
\DeclareMathOperator{\supp}{supp}

\DeclarePairedDelimiter\abs{\lvert}{\rvert}

\DeclarePairedDelimiter\norm{\lVert}{\rVert}
\DeclarePairedDelimiter\card{\lvert}{\rvert}
\providecommand\given{}
\newcommand\SetSymbol[1][]{%
\nonscript\:#1\vert
\allowbreak
\nonscript\:
\mathopen{}}
\DeclarePairedDelimiterX\Set[1]\{\}{\renewcommand\given{\SetSymbol[\delimsize]}#1}
\DeclarePairedDelimiterXPP\EE[1]{\E}{\lparen}{\rparen}{}{\renewcommand\given{\SetSymbol[\delimsize]}#1} 

\newcommand{\one}{\mathbf{1}}
\newcommand{\dis}{\mathrm{dis}}

\newcommand{\vertiii}[1]{{\left\lvert\kern-0.25ex\left\lvert\kern-0.25ex\left\lvert #1 
    \right\rvert\kern-0.25ex\right\rvert\kern-0.25ex\right\rvert}} 

\begin{document}
\title{Jump inequalities via real interpolation}
\author{Mariusz Mirek}
\address[Mariusz Mirek]{
  Department of Mathematics,
  Rutgers University,
Piscataway, NJ 08854, USA \&
Instytut Matematyczny,
Uniwersytet Wroc{\l}awski,
Plac Grunwaldzki 2/4,
50-384 Wroc{\l}aw
Poland}
\email{mirek@math.uni.wroc.pl}

\author{Elias M. Stein}
\address[Elias M. Stein]{
Department of Mathematics,
Princeton University,
Princeton,
NJ 08544-100 USA}
\email{stein@math.princeton.edu}

\author{Pavel Zorin-Kranich}
\address[Pavel Zorin-Kranich]{
  Mathematical Institute,
  University of Bonn,
  Endenicher Allee 60,
  53115 Bonn,
  Germany}
\email{pzorin@math.uni-bonn.de}

\begin{abstract}
Jump inequalities are the $r=2$ endpoint of L\'epingle's inequality for $r$-variation of martingales.
Extending earlier work by Pisier and Xu \cite{MR933985} we interpret these inequalities in terms of Banach spaces which are real interpolation spaces.
This interpretation is used to prove endpoint jump estimates for vector-valued martingales and doubly stochastic operators as well as to pass via sampling from $\R^{d}$ to $\Z^{d}$ for jump estimates for Fourier multipliers.
\end{abstract}

\thanks{Mariusz Mirek was partially supported by the Schmidt Fellowship and the IAS Found for Math.\ and by the National Science Center, NCN grant DEC-2015/19/B/ST1/01149.
  Elias M.~Stein was partially supported by NSF grant DMS-1265524.
  Pavel Zorin-Kranich was partially supported by the Hausdorff Center for Mathematics and DFG SFB-1060.}

\maketitle

\section{Introduction}
L\'epingle's inequality \cite{MR0420837} is a refinement of Doob's martingale maximal inequality in which the supremum over the time parameter is replaced by the stronger $r$-variation norm, $r>2$ (see Section~\ref{sec:r-variation} for the definition).
Since a sequence with bounded $r$-variation norm is necessarily a Cauchy sequence, L\'epingle's inequality is also a quantitative form of the martingale convergence theorem.

We are interested in the $r=2$ endpoint that had been first stated by Pisier and Xu \cite{MR933985} on $L^{2}$ and in a different formulation by Bourgain \cite[inequality  (3.5)]{MR1019960} on $L^{p}$, $1<p<\infty$.
We begin with Bourgain's formulation.

Throughout the article $B$ denotes a Banach space and $\bbI$ is a totally ordered set.
Unless otherwise stated, we consider only finite totally ordered sets $\bbI$.
This will ensure measurability of all functions that we define. All our estimates do not depend on
the cardinality of $\bbI$ and  the passage to the limiting case of infinite index sets $\bbI$ will be permitted by the monotone convergence theorem.

For any $\lambda>0$ the \emph{$\lambda$-jump counting function} of a function $f : \bbI \to B$ is defined by
\begin{align}
  \label{eq:def:jump}
  \begin{split}
N_{\lambda}(f)& :=
N_{\lambda}(f(t) : t\in\bbI)\\
&:=\sup \Set{J\in\N \given \exists_{\substack{t_{0}<\dotsb<t_{J}\\ t_{j}\in\bbI}}  : \min_{0<j\leq J} \norm{f(t_{j})-f(t_{j-1})}_{B} \geq \lambda}.  
  \end{split}
\end{align}
The quantity $N_{\lambda}(f(t) : t\in\bbI)$ is monotonically increasing in $\bbI$ and does not change upon replacing $f(t)$ by $f(t)+b$ for any fixed $b\in B$.

For a measure space $(X,\calB,\frakm)$ we consider the family of \emph{jump quasi-seminorms} on functions $f : X \times \bbI \to B$ defined by
\begin{align}
  \label{eq:jump-space}
  \begin{split}
  J^{p,q}_{\rho}(f)&:=J^{p,q}_{\rho}(f : X \times \bbI \to B):=J^{p,q}_{\rho}((f(\cdot, t))_{t\in\bbI} : X  \to B) \\
  &:=\sup_{\lambda>0} \norm[\big]{ \lambda N_{\lambda}(f(\cdot, t): t\in \bbI)^{1/\rho} }_{L^{p,q}(X)}
  \end{split}
\end{align}
for $0<p<\infty$, $0<q\leq\infty$, and $0<\rho<\infty$, where $L^{p,q}(X)$ denotes Lorentz spaces, see e.g.~\cite[Section 1.4]{MR3243734}.
The quantity $J^{p,q}_{\rho}$ is monotonically decreasing in $q$ and in $\rho$.
As for Lebesgue spaces $L^{p}(X)=L^{p,p}(X)$ we omit the index $q$ if $q=p$.

The jump quasi-seminorms $J^{p}_{r}$ are dominated by $L^p$ norms of $r$-variations (which are recalled in \eqref{eq:192}) in view of \eqref{eq:195}.
On the other hand, $L^{p}$ bounds for $r$-variations can be deduced from estimates for $J^{p}_{\rho}$ only when $r>\rho$, using Lemma~\ref{lem:jumps-control-variation} and interpolation.
Since in many situations one does not have $r$-variational control for $r=2$, having jump control for $\rho=2$ is then an end-point refinement.

We now briefly highlight the main results of this paper.
\begin{enumerate}
\item We prove that the quantities $J^{p,q}_{\rho}$ are in fact equivalent to Banach space norms which arise via real interpolation (see Lemma~\ref{lem:pisier-xu-space} and Corollary~\ref{cor:pisier-xu-space-convex}).
\item We obtain the end-point versions (for $\rho=2$) of L\'epingle's inequality, in the vector-valued setting.
  These are given in Theorems~\ref{thm:endpoint-lepingle:simple} and Theorem~\ref{thm:endpoint-lepingle}.
\item We show (in Theorem~\ref{thm:markov-jump}) that the end-point $\rho=2$ results hold for $J^{p}_{\rho}$ for doubly stochastic operators (hence for symmetric diffusion semi-groups), while previously these results were known only for $\rho>2$.
\item We also have (in Theorem~\ref{thm:msw-mult-jumps}) an extension of the sampling technique, that allows us to deduce jump inequalities in the discrete case from the continuous case, for appropriate Fourier multipliers.
  The original sampling theorem (in \cite{MR1888798}) was limited to $L^{p}(B)$, while the jump quantities are not equivalent to Banach spaces of this type.
\end{enumerate}
The results sketched above are applied in companion articles \cite{arxiv:1808.09048,arxiv:1809.03803}.

In more detail, one of our main results is the endpoint L\'epingle inequality for vector-valued martingales.
\begin{theorem}
\label{thm:endpoint-lepingle:simple}
Let  $p\in(1,\infty)$ and let $B$ be a Banach space with martingale cotype $\rho\in[2, \infty)$.
Then for every $\sigma$-finite measure space $(X,\calB,\frakm)$, every finite totally ordered set $\bbI$, and every martingale $\frakf=(\frakf_{t})_{t\in \bbI} : X \to B$ indexed by $\bbI$ with values in $B$ we have
\begin{equation}
\label{eq:endpoint-lepingle:simple}
J^{p}_{\rho}(\frakf : X \times \bbI \to B)
\lesssim_{p,\rho,B}
\sup_{t\in \bbI}\norm{\frakf_{t}}_{L^p(X;B)},
\end{equation}
where the implicit constant does not depend on the martingale $\frakf$.
\end{theorem}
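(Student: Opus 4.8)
The plan is to bound $\norm{\lambda N_{\lambda}(\frakf)^{1/\rho}}_{L^{p}(X)}$ for each fixed $\lambda>0$ by a constant multiple of $\sup_{t\in\bbI}\norm{\frakf_{t}}_{L^{p}(X;B)}$ that is independent of $\lambda$; taking the supremum over $\lambda$ then recovers $J^{p}_{\rho}(\frakf)$ by definition. Fix $\lambda$, write $T=\max\bbI$, and let $(\calB_{t})_{t\in\bbI}$ be the filtration of $\frakf$. First I would introduce the nondecreasing stopping times $\tau_{0}=\min\bbI$ and, recursively, $\tau_{k+1}=\inf\{t\in\bbI : t\geq\tau_{k},\ \norm{\frakf_{t}-\frakf_{\tau_{k}}}_{B}>\lambda/3\}$ (with the convention $\inf\emptyset=T$); since $\bbI$ is finite, $\tau_{k}=T$ for all $k$ beyond some $K$. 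Set $g_{k}=\frakf_{\tau_{k+1}}-\frakf_{\tau_{k}}$, so that $(\frakf_{\tau_{k}})_{0\le k\le K}$ is a $B$-valued martingale for the stopped filtration, with increments $g_{k}$.

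The combinatorial heart of the argument is the pointwise estimate
\[
N_{\lambda}(\frakf(x))\ \leq\ \#\{k : \norm{g_{k}(x)}_{B}>\lambda/3\}\ \leq\ (3/\lambda)^{\rho}\sum_{k}\norm{g_{k}(x)}_{B}^{\rho},\qquad x\in X.
\]
To get the first inequality, let $t_{0}<\dotsb<t_{J}$ realize $N_{\lambda}(\frakf(x))=J$. For each $j$ some $\tau_{k+1}$ lies in $(t_{j-1},t_{j}]$: otherwise $t_{j-1}$ and $t_{j}$ lie in a single block $[\tau_{k},\tau_{k+1})$, on which $\norm{\frakf_{s}-\frakf_{\tau_{k}}}_{B}\leq\lambda/3$ by minimality of $\tau_{k+1}$, whence $\norm{\frakf_{t_{j}}(x)-\frakf_{t_{j-1}}(x)}_{B}\leq 2\lambda/3<\lambda$, contradicting the choice of the $t_{j}$. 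For such a $k$ the set defining $\tau_{k+1}$ is nonempty (it contains $t_{j}$), so $\norm{g_{k}(x)}_{B}>\lambda/3$; and as the intervals $(t_{j-1},t_{j}]$ are pairwise disjoint, distinct $j$ yield distinct $k$. Chebyshev's inequality gives the second inequality, and taking $\rho$-th roots yields $\lambda N_{\lambda}(\frakf)^{1/\rho}\leq 3\bigl(\sum_{k}\norm{g_{k}}_{B}^{\rho}\bigr)^{1/\rho}$ pointwise on $X$.

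It then remains to estimate $\norm[\big]{\bigl(\sum_{k}\norm{g_{k}}_{B}^{\rho}\bigr)^{1/\rho}}_{L^{p}(X)}$, i.e.\ the $L^{p}(X)$ norm of the $\rho$-function of the sampled martingale $(\frakf_{\tau_{k}})_{k}$. Here I would invoke the classical fact that martingale cotype $\rho$ --- an $L^{\rho}$ statement by definition --- self-improves to the $L^{p}$ estimate $\norm[\big]{\bigl(\sum_{k}\norm{M_{k}-M_{k-1}}_{B}^{\rho}\bigr)^{1/\rho}}_{L^{p}}\lesssim_{p,\rho,B}\sup_{k}\norm{M_{k}}_{L^{p}(B)}$, valid for every $B$-valued martingale $M$ and every $p\in(1,\infty)$. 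Applied to $M_{k}=\frakf_{\tau_{k}}$, this bounds the $\rho$-function by $\sup_{k}\norm{\frakf_{\tau_{k}}}_{L^{p}(X;B)}$. Finally, optional sampling on the finite set $\bbI$ gives $\frakf_{\tau_{k}}=\E[\frakf_{T}\mid\calB_{\tau_{k}}]$, and since conditional expectation is a contraction on $L^{p}(X;B)$ this supremum is $\leq\norm{\frakf_{T}}_{L^{p}(X;B)}\leq\sup_{t\in\bbI}\norm{\frakf_{t}}_{L^{p}(X;B)}$. Chaining the three displays and taking the supremum over $\lambda$ proves \eqref{eq:endpoint-lepingle:simple}.

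I expect the main obstacle to be this last part. The passage of the martingale cotype inequality from the defining exponent $\rho$ to a general $L^{p}$ is a genuine, if standard, input, and it is precisely what makes $\rho=2$ an \emph{endpoint}: replacing the cardinality $\#\{k:\norm{g_{k}}_{B}>\lambda/3\}$ by a $\rho$-variation of $\frakf$ would be useless, since the $\rho$-variation at the critical exponent admits no $L^{p}$ bound by the maximal function. The optional-sampling step is slightly delicate only because $(X,\calB,\frakm)$ need not be a probability space, but finiteness of $\bbI$ makes the stopped martingale closed by $\frakf_{T}$ and removes the difficulty. An alternative that avoids the stopping-time bookkeeping altogether is to combine the same $L^{p}$ cotype inequality with the identification of $J^{p}_{\rho}$ as a real-interpolation-space norm (Lemma~\ref{lem:pisier-xu-space}).
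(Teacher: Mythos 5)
Your proof is correct and follows essentially the same route as the paper's: a stopping-time decomposition at scale $\sim\lambda$, the observation that each $\lambda$-jump of $\frakf$ forces a large increment of the sampled martingale $(\frakf_{\tau_k})_k$, the $L^p$ form of the martingale cotype $\rho$ square-function inequality, and optional sampling together with contractivity of conditional expectation on $L^p(X;B)$. The only difference is presentational: you bound $N_\lambda$ pointwise and directly, whereas the paper first proves a $K$-functional estimate (Theorem~\ref{thm:endpoint-lepingle}, splitting $\frakf=\frakf^0+\frakf^1$ with $\norm{\frakf^0}\leq\lambda$ and $V^1(\frakf^1)\leq\lambda^{1-\rho}(S_\rho\tilde\frakf)^\rho$) and then converts it to the jump bound via Lemma~\ref{lem:pisier-xu-space} --- a detour that costs nothing for Theorem~\ref{thm:endpoint-lepingle:simple} itself but produces the interpolation-space formulation needed later for Theorems~\ref{thm:markov-jump} and~\ref{thm:msw-mult-jumps}.
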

The notion of cotype for Banach spaces is recalled in Section~\ref{sec:martingale}, where a more precise version of Theorem~\ref{thm:endpoint-lepingle:simple}, namely Theorem~\ref{thm:endpoint-lepingle}, is stated and proved.
Hilbert spaces, and in particular the scalar field $B=\C$, have martingale cotype $2$.

In the above formulation, the scalar case $B=\C$ of Theorem~\ref{thm:endpoint-lepingle:simple} is due to Bourgain \cite[inequality  (3.5)]{MR1019960}. In the vector-valued case Theorem~\ref{thm:endpoint-lepingle:simple} is an endpoint of the $r$-variation, $r>\rho$, estimate in \cite[Theorem 4.2]{MR933985}.
The basic argument that deduces the $r$-variation from $\rho$-jump estimates appears in \cite[Lemma 2.1]{MR2434308}.
We record a refined version of that argument in Lemma~\ref{lem:jumps-control-variation}.

Prior to \cite{MR1019960} an endpoint L\'epingle inequality in the case $p=\rho=2$, $B=\C$, formulated in terms of an estimate in a real interpolation space, appeared in an article by Pisier and Xu \cite[Lemma 2.2]{MR933985}.
Our Lemma~\ref{lem:pisier-xu-space} shows that the jump quasi-seminorms \eqref{eq:jump-space} are equivalent to the (quasi-)norms on certain real interpolation spaces which include those used by Pisier and Xu.
In particular this shows that the endpoint L\'epingle inequalities of Bourgain and of Pisier and Xu are equivalent.
While the formulation in terms of the jump quasi-seminorm is convenient for some purposes (e.g.\ in the proof of Lemma~\ref{lem:jumps-control-variation}), the real interpolation point of view, further explained in Section~\ref{sec:jump-interpolation}, turns out to be crucial in the proofs of Theorem~\ref{thm:markov-jump} and Theorem~\ref{thm:msw-mult-jumps} below.

By Rota's dilation theorem estimates for martingales can be transferred to doubly stochastic operators.
In the case of jump inequalities an additional complication arises.
Namely, the quasi-seminorm \eqref{eq:jump-space} does not seem to define a vector-valued Lorentz space, so it appears unclear whether conditional expectation operators are bounded with respect to it.
This obstacle will be overcome by Lemma~\ref{lem:pisier-xu-space} and the Marcinkiewicz interpolation theorem.
This allows us to deduce the following result.

\begin{theorem}
\label{thm:markov-jump}
Let $(X,\calB,\frakm)$ be a $\sigma$-finite measure space and let $Q$ be a doubly stochastic operator on $L^{1}(X) + L^{\infty}(X)$, that is,
\begin{enumerate}
\item $\norm{Qf}_{L^1(X)} \leq \norm{f}_{L^1(X)}$ for all $f\in L^{1}(X)$,
\item $\norm{Qf}_{L^{\infty}(X)} \leq \norm{f}_{L^{\infty}(X)}$ for all $f\in L^{\infty}(X)$,
\item $f\geq 0 \implies Qf \geq 0$,
\item $Q \one_{X} = Q^{*} \one_{X} = \one_{X}$.
\end{enumerate}
Let $B$ be a Banach space with martingale cotype $\rho \in [2,\infty)$.
Then for every $p \in (1,\infty)$ and every measurable function $f : X \to B$ we have
\begin{equation}
\label{eq:markov-jump}
J^{p}_{\rho}(( (Q^{*})^{n} Q^{n}f)_{n\in\N} : X  \to B)
\lesssim_{p,\rho,B}
\norm{f}_{L^{p}(X;B)}.
\end{equation}
\end{theorem}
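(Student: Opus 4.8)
The plan is to combine Rota's dilation theorem with the endpoint L\'epingle inequality (Theorem~\ref{thm:endpoint-lepingle:simple}), using the real interpolation description of the jump quasi-seminorm from Lemma~\ref{lem:pisier-xu-space} to circumvent the fact that \eqref{eq:jump-space} is not manifestly a vector-valued Lorentz norm. Concretely, Rota's theorem provides, for the doubly stochastic operator $Q$, a probability space $(\Omega,\mathcal{F},P)$ containing $X$ as a factor, a decreasing filtration $(\mathcal{F}_n)_{n\in\N}$, and dilation/projection maps so that $(Q^*)^n Q^n f = \E[\,\hat f \mid \mathcal{F}_n\,]$ for a suitable lift $\hat f$ of $f$, with $\norm{\hat f}_{L^p(\Omega;B)} = \norm{f}_{L^p(X;B)}$. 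Thus the sequence $((Q^*)^nQ^nf)_n$ is (the restriction to $X$ of) a reversed $B$-valued martingale, and reversed martingales are covered by Theorem~\ref{thm:endpoint-lepingle:simple} applied with the totally ordered index set $\bbI$ reversed (for finite $\bbI$; the infinite case $\N$ then follows by monotone convergence as explained in the introduction).

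First I would recall/state Rota's dilation theorem in the $B$-valued form, observing that the conditional expectation operators act diagonally on $B$-valued functions so that no vector-valued subtlety enters at this stage. Second, I would invoke Lemma~\ref{lem:pisier-xu-space} to replace $J^p_\rho$ by an equivalent (quasi-)norm on a real interpolation space; the point is that on that scale one may use genuine interpolation/extrapolation tools. Third — and this is where the Marcinkiewicz interpolation theorem enters — I would establish the needed boundedness of conditional expectation operators (equivalently of the projections in Rota's construction) on the relevant interpolation spaces: Theorem~\ref{thm:endpoint-lepingle:simple} gives the $L^p$ endpoint jump estimate for the martingale, and then a restriction/projection argument transfers it to $((Q^*)^nQ^nf)_n$. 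Finally I would translate back via Lemma~\ref{lem:pisier-xu-space} to obtain \eqref{eq:markov-jump}, and pass from finite truncations of $\N$ to all of $\N$ by monotone convergence.

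The main obstacle, as the excerpt itself flags, is precisely step three: because $J^p_\rho$ "does not seem to define a vector-valued Lorentz space," one cannot directly assert that conditional expectation (a contraction on $L^1$ and $L^\infty$) is bounded in the jump quasi-seminorm by a soft argument. The resolution is to pass to the interpolation-space picture of Lemma~\ref{lem:pisier-xu-space}, where $J^p_\rho$ sits between two endpoint spaces on which the operators in question are visibly bounded, and then apply Marcinkiewicz interpolation; some care is needed because $\rho$ may be strictly greater than $2$, so the target cotype and the interpolation parameters must be matched correctly, and because one must check that the bounds are uniform in the (finite) truncation of the index set so that the limiting case $\bbI = \N$ is legitimate. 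Once that boundedness is in hand, the rest is a routine assembly of Rota's theorem, Theorem~\ref{thm:endpoint-lepingle:simple}, and the equivalence of Lemma~\ref{lem:pisier-xu-space}.
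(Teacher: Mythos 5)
Your proposal is correct and follows essentially the same route as the paper: Rota's dilation theorem to realize $((Q^{*})^{n}Q^{n}f)_{n}$ as the image under a positive contractive projection $S$ of a (reversed) $B$-valued martingale, the endpoint L\'epingle inequality for that martingale, and then Lemma~\ref{lem:pisier-xu-space} together with Marcinkiewicz interpolation to transfer the bound through $S$, since $J^{p}_{\rho}$ is equivalent to a real interpolation space between Bochner spaces on which $S$ is visibly contractive. The only cosmetic difference is that the paper works with the $\sigma$-finite version of Rota's theorem and makes the choice $\theta=\max(1/p,1/\rho)$ explicit, but your identification of the key obstacle and its resolution matches the paper's argument.
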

Here we identify $Q$ and $Q^{*}$ with the unique contractive extensions of the tensor products $Q \otimes \id_{B}$ and $Q^{*} \otimes \id_{B}$ to $L^{p}(X;B)$, respectively.

Theorem~\ref{thm:markov-jump} is new even in the case $B=\C$ providing the endpoint to \cite[Proposition 3.1(1)]{MR1869071}.
In the vector-valued case Theorem~\ref{thm:markov-jump} is an endpoint of \cite[Theorem 5.2]{MR3648493} (with $m=0$) and \cite[Corollary 6.2]{MR3648493}.
In \cite{MR3648493} the non-endpoint result was formulated for Banach spaces that are interpolation spaces between a Hilbert space and a uniformly convex space.
It was recently extended to general Banach spaces with finite cotype in \cite{arxiv:1803.05107}.

Applying Theorem~\ref{thm:markov-jump} with $B=\C$ and $\rho=2$ one can obtain endpoint jump inequalities in a number of situations, e.g.\ for averages associated to convex bodies (see \cite{MR3777413}; we give a simplified proof in \cite{arxiv:1808.09048}) or to spheres in the free group (using Bufetov's proof \cite{MR1923970,MR2923460} of the result from \cite{MR1294672} and its extensions).

Our last result concerns periodic Fourier multipliers for functions on $\Z^{d}$.
From \cite[Section 2]{MR1888798} we know how sampling can be used to pass from $\R^{d}$ to $\Z^{d}$ in Bochner space multiplier estimates, see also Proposition~\ref{prop:msw-mult} (a Bochner space is a vector-valued space $L^{p}(B)$, where $B$ is a Banach space).
These results do not apply to the spaces defined by \eqref{eq:jump-space} because they are not Bochner spaces.
In Proposition~\ref{prop:msw-mult-interpolation} we extend this sampling technique to real interpolation spaces between Bochner spaces (that are in general not Bochner spaces, see \cite{MR0358326} for counterexamples).
This leads to the following result on jump spaces.
\begin{theorem}
\label{thm:msw-mult-jumps}
Let $p,\rho \in (1,\infty)$.
Let $q\in\Z$ be a positive integer, $\bbI$ a countable ordered set, and $m : \R^{d} \to \C^{\bbI}$ a bounded sequence-valued measurable function supported on $\frac{1}{q}[-1/2,1/2]^{d} \times \bbI$.
Let $T$ be the sequence-valued Fourier multiplier operator corresponding to $m$.
Define a periodic multiplier by
\begin{equation}
\label{eq:m_per}
m^{q}_{\mathrm{per}}(\xi) : = \sum_{l\in\Z^{d}} m(\xi-l/q)
\end{equation}
and denote the associated Fourier multiplier operator over $\Z^{d}$ by $T^{q}_{\dis}$.
Then
\[
\norm{ T^{q}_{\dis} }_{\ell^{p}(\Z^{d}) \to J^{p}_{\rho}(\Z^{d} \times \bbI \to \C)}
\lesssim_{p,\rho,d}
\norm{ T }_{L^{p}(\R^{d}) \to J^{p}_{\rho}(\R^{d} \times \bbI \to \C)}.
\]
\end{theorem}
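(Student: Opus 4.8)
The plan is to reduce Theorem~\ref{thm:msw-mult-jumps} to the abstract sampling statement for real interpolation spaces announced as Proposition~\ref{prop:msw-mult-interpolation}, using the identification of the jump quasi-seminorms $J^{p}_{\rho}$ with norms on real interpolation spaces provided by Lemma~\ref{lem:pisier-xu-space} (and its convex companion Corollary~\ref{cor:pisier-xu-space-convex}). First I would recall the classical sampling principle of Magyar--Stein--Wainger (Proposition~\ref{prop:msw-mult}): for a multiplier $m$ supported on $\frac{1}{q}[-1/2,1/2]^{d}\times\bbI$ and valued in a Banach space $E$, the periodized multiplier $m^{q}_{\mathrm{per}}$ yields a discrete operator $T^{q}_{\dis}$ whose norm $\ell^{p}(\Z^{d})\to\ell^{p}(\Z^{d};E)$ is controlled by the norm of $T$ as an operator $L^{p}(\R^{d})\to L^{p}(\R^{d};E)$, uniformly in $q$. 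Here the relevant choice is $E=c_{0}(\bbI)$ or $\ell^{\infty}(\bbI)$ (or a suitable sequence space), since $m$ is sequence-valued; but the point of the theorem is precisely that $J^{p}_{\rho}$ is \emph{not} a Bochner space $L^{p}(E)$, so one cannot apply Proposition~\ref{prop:msw-mult} directly and must instead pass through Proposition~\ref{prop:msw-mult-interpolation}.

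The second step is to write $J^{p}_{\rho}(\R^{d}\times\bbI\to\C)$ and $J^{p}_{\rho}(\Z^{d}\times\bbI\to\C)$ as (equivalent to) real interpolation spaces $(A_{0},A_{1})_{\theta,r}$ with $A_{0},A_{1}$ Bochner spaces of the form $L^{p_{i}}(\R^{d};E_{i})$, respectively $\ell^{p_{i}}(\Z^{d};E_{i})$, where $E_{i}$ are fixed sequence spaces over $\bbI$ — this is exactly the content of Lemma~\ref{lem:pisier-xu-space} together with the reiteration/convexity remarks around it. The key structural observation is that the \emph{same} multiplier $m$ (hence $m^{q}_{\mathrm{per}}$), acting componentwise in the $\bbI$ variable, induces bounded operators on each of the endpoint Bochner spaces, with the discrete-to-continuous norm comparison furnished componentwise by Proposition~\ref{prop:msw-mult}; and since real interpolation is an exact interpolation functor, an operator bounded on both endpoints is bounded on the interpolation space with norm controlled by the geometric mean of the endpoint norms. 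Thus once Proposition~\ref{prop:msw-mult-interpolation} is set up to say that sampling commutes appropriately with the $K$-functional, the bound on $T^{q}_{\dis}$ between the interpolated spaces follows from the bound on $T$ between the corresponding interpolated spaces, which by Lemma~\ref{lem:pisier-xu-space} is exactly $\norm{T}_{L^{p}(\R^{d})\to J^{p}_{\rho}(\R^{d}\times\bbI\to\C)}$.

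The third, more delicate, step is to make sure the domains match: the source space on both sides is a plain (scalar) $L^{p}$ or $\ell^{p}$ space, not an interpolation space, so one needs the interpolation identity only on the target side, and one must check that the interpolation description of $J^{p}_{\rho}$ in Lemma~\ref{lem:pisier-xu-space} is compatible with how Proposition~\ref{prop:msw-mult-interpolation} handles operators $L^{p}\to(A_{0},A_{1})_{\theta,r}$. Concretely I would argue: for each $\lambda>0$ the relevant endpoint estimate for $\lambda N_{\lambda}^{1/\rho}$ is an $L^{p,q}$ estimate, these $L^{p,q}$ quantities are themselves captured by real interpolation between $L^{p_{0}}$ and $L^{p_{1}}$, and the sampling comparison passes to each endpoint uniformly in $\lambda$; then take the supremum over $\lambda$ at the end, which is legitimate because the interpolation constants are independent of $\lambda$.

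I expect the main obstacle to be the \textbf{setup of Proposition~\ref{prop:msw-mult-interpolation}} — i.e.\ verifying that the Magyar--Stein--Wainger sampling construction is compatible with the real interpolation functor in a way that is uniform in the sampling parameter $q$ and in the auxiliary parameter $\lambda$ appearing in the definition of $J^{p}_{\rho}$. The subtlety is that the discretization map and its inverse (extension/smoothing operators built from a fixed bump function at scale $q$) must be shown to be bounded, uniformly in $q$, simultaneously on the two Bochner endpoints, so that the $K$-functionals on the discrete and continuous sides are comparable; once that is in hand, the rest is a formal application of the exactness of real interpolation combined with Lemma~\ref{lem:pisier-xu-space}. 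A secondary technical point is the reduction from the countable index set $\bbI$ to finite sub-intervals and the passage to the limit via monotone convergence, but this is routine given the remarks in the introduction about independence of all constants from $\card{\bbI}$.
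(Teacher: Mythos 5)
Your top-level reduction is exactly the paper's proof of the theorem: pass to finite $\bbI$ by monotone convergence, identify $J^{p}_{\rho}$ with the interpolation space $[L^{\infty}(\cdot;V^{\infty}_{\bbI\to\C}),L^{\theta p}(\cdot;V^{\theta\rho}_{\bbI\to\C})]_{\theta,\infty}$ via Lemma~\ref{lem:pisier-xu-space} (with $\theta=\min(1/p,1/\rho)$, so that $\theta p\geq 1$ and $V^{\theta\rho}_{\bbI\to\C}$ is a normed space), and invoke Proposition~\ref{prop:msw-mult-interpolation} with $A_{0}=V^{\infty}_{\bbI\to\C}$, $A_{1}=V^{\theta\rho}_{\bbI\to\C}$, and $B=\C$. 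At that level the proposal matches the paper.

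However, your stated mechanism for why the sampling step works is not the right one and would fail if carried out. You claim as the ``key structural observation'' that $T$ induces bounded operators on each of the endpoint Bochner spaces and that exactness of real interpolation then transfers the bound to $T^{q}_{\dis}$. The hypothesis of the theorem provides only the single estimate $T:L^{p}(\R^{d})\to[L^{\infty}(\R^{d};A_{0}),L^{\theta p}(\R^{d};A_{1})]_{\theta,\infty}$; nothing is assumed about $T$ mapping into $L^{\infty}(\R^{d};V^{\infty}_{\bbI\to\C})$ or $L^{\theta p}(\R^{d};V^{\theta\rho}_{\bbI\to\C})$ separately, and such endpoint bounds need not hold in the applications, so there is no operator-at-the-endpoints to interpolate. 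What actually gets interpolated in Proposition~\ref{prop:msw-mult-interpolation} is the restriction operator $\calR$, which is bounded $L^{p}(\R^{d};E)\to\ell^{p}(\Z^{d};E)$ for every $p$ and every finite-dimensional Banach space $E$ with a uniform constant; Marcinkiewicz interpolation then yields \eqref{eq:Fourier-restr-interpolation}, and the identity $[T^{q}]_{\dis}f=\calR(\tilde T^{q}(\calE f))$ transfers the discrete estimate back to the continuous hypothesis. Your closing paragraph gestures at this correct mechanism, but it contradicts the reasoning in your second paragraph. A second missing ingredient: the relation \eqref{eq:202} between $T^{q}_{\dis}$ and $[T^{q}]_{\dis}$ holds only along a congruence class modulo $q$, and reassembling the target norm over the $q^{d}$ classes is not automatic precisely because the target is not a Bochner space; this is the role of Lemma~\ref{lem:weak-lp-cong-class-split}, which in turn rests on the explicit description of the $K$-functional for couples of Bochner spaces (Theorem~\ref{thm:K-vector}). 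Without these two ingredients the ``formal application of exactness'' you describe does not go through.
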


Several multipliers to which Theorem~\ref{thm:msw-mult-jumps} applies can be found in the article \cite{MR2434308}.
Using Theorem~\ref{thm:msw-mult-jumps} instead of \cite[Corollary 2.1]{MR1888798} one can obtain the $r=2$ jump endpoint of the variational estimates in \cite{MR3681393}.
Details of this argument appear in \cite{arxiv:1809.03803}.

We follow the convention that $x \lesssim y$ ($x \gtrsim y$) means that $x \leq C y$ ($x \geq C y$) for some absolute constant $C>0$.
If $x \lesssim y$ and $x \gtrsim y$ hold simultaneously then we will write $x \simeq y$. 
The dependence of the implicit constants on some parameters is indicated by a subscript to the symbols $\lesssim$, $\gtrsim$ and $\simeq$. 

The set of non-negative integers will be denoted by $\N:=\{0, 1, 2, \ldots\}$.

\section{Jump inequalities: abstract theory}
\label{sec:jump-interpolation}

\subsection{$r$-variation}
\label{sec:r-variation}
The \emph{$r$-variation (quasi-)seminorm} of a function $f : \bbI \to B$ is defined by
\begin{align}
  \label{eq:192}
  \begin{split}
V^{r}(f)
:=&V^{r}(f(t) : t\in\bbI)\\
:=&
\begin{cases}
\sup_{J\in\N} \sup_{\substack{t_{0}<\dotsb<t_{J}\\ t_{j}\in\bbI}}\Big(\sum_{j=0}^{J-1}  \norm{f(t_{j+1})-f(t_{j})}_B^{r} \Big)^{1/r},  
&
0< r <\infty,\\
\sup_{\substack{t_{0}<t_{1}\\ t_{j}\in\bbI}} \norm{f(t_{1})-f(t_{0})}_B,  
& r = \infty,
\end{cases}
\end{split}
\end{align}
where the former supremum is taken over all finite increasing sequences in $\bbI$.

The quantity $V^{r}(f(t) : t\in\bbI)$ is monotonically increasing in $\bbI$ and monotonically decreasing in $r$.
Moreover,
\begin{align}
\label{eq:157}
V^r(f(t): t\in \bbI)
\leq
2 \Big(\sum_{j\in\bbI}  \norm{f(j)}_B^{r} \Big)^{1/r}
\quad
\text{for } 1\leq r<\infty.
\end{align}
The quantity $V^{r}(f)$ vanishes if and only if the function $f : \bbI \to B$ is constant.
For the purpose of using interpolation theory it is convenient to factor out the constant functions.
The space of sequences $f : \bbI \to B$ with bounded $r$-variation modulo the constant functions is denoted by
\[
V^{r}_{\bbI \to B} := \Set{ f : \bbI \to B \given V^{r}(f) < \infty } / B.
\]
For every $\lambda>0$ and $0<r<\infty$ we have
\begin{align}
\label{eq:195}
\lambda N_{\lambda}(f(t): t\in \bbI)^{1/r}\le V^r(f(t): t\in \bbI).
\end{align}
Indeed, for any increasing sequence $t_{0}<\dotsb<t_{J}$ in $\bbI$ with $\norm{f(t_{j})-f(t_{j-1})}_{B} \geq \lambda$ for all $0<j\leq J$ as in the definition \eqref{eq:def:jump} of $N_{\lambda}(f)$ we have
\[
\lambda J^{1/r}
=
\bigl( \sum_{j=1}^{J} \lambda^{r} \bigr)^{1/r}
\leq
\bigl( \sum_{j=1}^{J} \norm{f(t_{j})-f(t_{j-1})}_{B}^{r} \bigr)^{1/r}
\leq
V^r(f(t): t\in \bbI).
\]


\subsection{Real interpolation}
In this section we recall the definition of Peetre's $K$-method of real interpolation.
The classical reference on this subject is the book \cite{MR0482275}.

A pair $\bar A = (A_{0},A_{1})$ of quasinormed vector spaces is called \emph{compatible} if they are both contained in some ambient topological vector space and the intersection $A_{0}\cap A_{1}$ is dense both in $A_{0}$ and in $A_{1}$.
For any $a\in A_{0}+A_{1}$ and $t\in [0, \infty)$ the \emph{$K$-functional} is defined by
\[
K(t,a;\bar A) := \inf_{a=a_{0}+a_{1}} (\norm{a_{0}}_{A_{0}} + t \norm{a_{1}}_{A_{1}}).
\]
The function $t\mapsto K(t,a;\bar A)$ is non-negative and non-decreasing on $[0, \infty)$.
It is concave if the quasinorms on $A_{0}$ and $A_{1}$ are subadditive (that is, actual norms).
Also, for $s\in (0,\infty)$ and $t\in [0,\infty)$ we have
\begin{equation}
\label{eq:206}
K(t,a; A_{0}, A_{1}) \leq \max(1,t/s) K(s,a; A_{0}, A_{1}).
\end{equation}

The real interpolation space $[A_{0},A_{1}]_{\theta,r}$ for $0<\theta<1$ and $0< r < \infty$ is defined by the quasi-norm
\begin{equation}
\label{eq:KJ-norms}
[A_{0},A_{1}]_{\theta,r}(a)
:=
\Big(\sum_{j\in\Z}\big(2^{-j\theta}K(2^{j},a;\bar A)\big)^r\Big)^{1/r}
\end{equation}
with the natural modification
\begin{equation}
\label{eq:KJ-inf-norms}
[A_{0},A_{1}]_{\theta,\infty}(a)
:=
\sup_{j\in\Z} \big(2^{-j\theta}K(2^{j},a;\bar A)\big)
\end{equation}
in the case $r=\infty$.
If quasinorms on both spaces $A_{0},A_{1}$ are in fact norms and $1\leq r\leq\infty$, then \eqref{eq:KJ-norms} defines a norm on $[A_{0},A_{1}]_{\theta,r}$.
From monotonicity of $\ell^{r}$ norms it is easy to see that
\[
[A_{0},A_{1}]_{\theta,r}
\subseteq
[A_{0},A_{1}]_{\theta,q},
\quad\text{ whenever }\quad  0<r\leq q\leq\infty.
\]
Note that
\[
K(t,a; A_0, A_1)=tK(t^{-1},a; A_1, A_0),
\]
so $[A_{0},A_{1}]_{\theta,r}=[A_{1},A_{0}]_{1-\theta,r}$.

\subsection{Jump inequalities as an interpolation space}

The following observation seems to be new and allows us to use standard real interpolation tools to deal with jump inequalities.
\begin{lemma}
\label{lem:pisier-xu-space}
For every $p\in(0, \infty)$, $q\in(0,\infty]$, $\rho\in(0, \infty)$, and $\theta\in(0, 1)$ there exists a constant $0<C=C_{p, q, \rho, \theta} < \infty$ such that the following holds.
Let $(X, \calB, \frakm)$ be a measure space, $\bbI$ a finite totally ordered set, $B$ a Banach space, and $f : X \times \bbI \to  B$ a measurable function.
Then
\begin{equation}
\label{eq:197}
C^{-1} J^{p,q}_{\rho}(f)
\le
\big[L^{\infty}(X;V_{\bbI \to B}^{\infty}),L^{\theta p, \theta q}(X;V_{\bbI \to B}^{\theta\rho})\big]_{\theta,\infty}(f)
\le C J^{p,q}_{\rho}(f).
\end{equation}
\end{lemma}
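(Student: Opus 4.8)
The plan is to compute the $K$-functional for the pair $\bar A := (L^{\infty}(X;V^{\infty}_{\bbI\to B}), L^{\theta p,\theta q}(X;V^{\theta\rho}_{\bbI\to B}))$ pointwise in $X$, reducing everything to a single estimate: for a fixed sequence $g:\bbI\to B$ and $t>0$, I claim
\[
K(t^{\theta\rho}, g; V^{\infty}_{\bbI\to B}, V^{\theta\rho}_{\bbI\to B})
\simeq_{\rho,\theta}
\bigl(t\, N_{t}(g)^{1/\rho}\bigr) \wedge V^{\infty}(g),
\]
or something equivalent after the obvious rescaling of the parameter. The point is that decomposing $g = g_{0} + g_{1}$ with $g_{0}$ of small $\infty$-variation and $g_{1}$ of small $\theta\rho$-variation is essentially the same as \emph{recording the big jumps of $g$}: put the jumps of size $\geq\lambda$ into $g_{1}$ (so $V^{\theta\rho}(g_{1})^{\theta\rho}\simeq\sum(\text{big jumps})^{\theta\rho}$, which is at least $\lambda^{\theta\rho}N_{\lambda}(g)$) and leave the remaining oscillation, of size $<\lambda$ in each increment, in $g_{0}$. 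Conversely, any decomposition forces $\sum(\text{increments of }g)^{\theta\rho}\leq 2^{\theta\rho}(V^{\theta\rho}(g_{1})^{\theta\rho} + \text{(number of big increments)}\cdot V^{\infty}(g_{0})^{\theta\rho})$, which after optimizing in the splitting threshold gives the lower bound on $K$ in terms of $\lambda N_{\lambda}(g)^{1/\rho}$. This is precisely the content of \cite[Lemma 2.1]{MR2434308} / Lemma~\ref{lem:jumps-control-variation} in disguise, and it is the step I expect to carry most of the weight.

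Granting the pointwise $K$-functional computation, I would next integrate over $X$. By definition of the $L^{r}(X)$ and $L^{r,s}(X)$ norms of the component spaces, and using that $L^{\theta p,\theta q}(X)$ is (up to equivalence of quasinorms) the $\theta$-power of $L^{p,q}(X)$ applied to the $\theta$-th power of the function, the mixed-norm $K$-functional $K(s, f; \bar A)$ should come out equivalent to
\[
s \mapsto \bignorm[\big]{\, (\lambda N_{\lambda}(f(\cdot,t):t\in\bbI)^{1/\rho}) \wedge V^{\infty}(f(\cdot,t):t\in\bbI)\,}_{\text{(appropriate Lorentz norm of }X)}
\]
with $\lambda$ a fixed power of $s$. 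Taking the weighted supremum over $s=2^{j}$, $j\in\Z$, as in \eqref{eq:KJ-inf-norms}, the truncation by $V^{\infty}$ becomes harmless (for large $\lambda$ the minimum is $V^{\infty}$, but the weight $2^{-j\theta}$ kills that tail; for small $\lambda$ the minimum is the jump term), so the supremum over $j$ recovers exactly $\sup_{\lambda>0}\norm{\lambda N_{\lambda}(f)^{1/\rho}}_{L^{p,q}(X)} = J^{p,q}_{\rho}(f)$. The only subtlety here is checking that passing from a supremum over the dyadic grid $\{2^{j}\}$ to a supremum over all $\lambda>0$ costs only a constant; this follows from \eqref{eq:206} together with the fact that $\lambda N_{\lambda}(f)^{1/\rho}$ is, up to constants, "slowly varying" in $\lambda$ on each dyadic block — more precisely $N_{\lambda}$ is non-increasing in $\lambda$, so $\lambda N_{\lambda}(f)^{1/\rho}$ compared on $[2^{j},2^{j+1})$ changes by at most a factor of $2$.

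The main obstacle, as indicated, is the pointwise two-sided bound on the $K$-functional between $V^{\infty}$ and $V^{\theta\rho}$: the upper bound on $K$ (exhibiting a good decomposition $g=g_{0}+g_{1}$ built from the jump structure) requires care in how one distributes the "small" oscillation so that $V^{\infty}(g_{0})$ is genuinely controlled by $\lambda$ while simultaneously $g_{1}$ picks up only the $N_{\lambda}(g)$ large jumps; one has to be careful that removing a jump does not create a larger jump elsewhere, which is handled by choosing the breakpoints greedily. The lower bound on $K$ is the cleaner direction and is essentially the inequality $\lambda N_{\lambda}(h)^{1/r}\leq V^{r}(h)$ from \eqref{eq:195} applied to $g_{1}$, combined with the triangle-type inequality for $N_{\lambda}$ under the splitting $g=g_{0}+g_{1}$ (a jump of $g$ of size $\geq 2\lambda$ forces a jump of $g_{0}$ of size $\geq\lambda$ or a jump of $g_{1}$ of size $\geq\lambda$). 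Everything else — the behavior of Lorentz norms under powers, the comparison of the dyadic and continuous suprema, the role of factoring out constants so that $V^{\infty}_{\bbI\to B}$ and $V^{\theta\rho}_{\bbI\to B}$ form a compatible couple — is routine and can be cited or dispatched quickly.
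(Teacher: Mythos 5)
Your lower-bound direction (controlling $J^{p,q}_{\rho}(f)$ by the interpolation norm) is essentially the paper's argument: given a near-optimal splitting $f=f^{0}+f^{1}$ at threshold $\lambda$, a $\lambda$-jump of $f$ forces a $\lambda/2$-jump of $f^{1}$, and \eqref{eq:195} finishes. The greedy stopping-time splitting for the converse is also the right device. But the claim you hang everything on --- the pointwise two-sided equivalence
\[
K\bigl(t^{\theta\rho}, g; V^{\infty}_{\bbI\to B}, V^{\theta\rho}_{\bbI\to B}\bigr) \simeq \bigl(t\,N_{t}(g)^{1/\rho}\bigr)\wedge V^{\infty}(g)
\]
--- is false, and no rescaling of the parameter repairs it. Take $g$ with a single increment of size $M$: then $K(s,g;V^{\infty},V^{\theta\rho}) = M\min(1,s)$, whereas $N_{t}(g)=1$ for $t\leq M$ and $=0$ otherwise, so the right-hand side is $\min(t,M)$ for $t\leq M$ and $0$ for $t>M$; at $t=1$ and $M$ large the two sides are $M$ and $1$. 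The structural reason is that $V^{\theta\rho}(g_{1})$ must pay for jumps at \emph{all} scales above the threshold, weighted by their sizes, while $t N_{t}(g)$ sees only the single scale $t$; this is exactly why the paper's bound for $V^{\theta\rho}(f^{1})$ is the multi-scale sum $\lambda^{\theta\rho}\sum_{n\geq 0}\alpha^{n\theta\rho}N_{\alpha^{n}\lambda}(f)$ rather than $\lambda^{\theta\rho}N_{\lambda}(f)$. The equivalence asserted by the lemma holds only after taking suprema over all scales/parameters; it does not hold scale-by-scale, so it cannot be integrated over $X$ one parameter value at a time.

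The integration step has a second, independent gap: even granting a correct pointwise $K$-functional, passing to $K(s,f;L^{\infty}(X;A_{0}),L^{\theta p,\theta q}(X;A_{1}))$ requires a vector-valued $K$-functional formula with a supremum over weight functions $\psi:X\to(0,\infty)$ (the optimal splitting threshold genuinely varies with $x$), as in Theorem~\ref{thm:K-vector}; that formula is available for couples $(L^{\rho},L^{\infty})$ with $\rho\in[1,\infty)$ and does not cover $L^{\theta p,\theta q}$ with $q\neq p$, nor $\theta p<1$. The paper avoids both issues by exhibiting an explicit splitting with a threshold $\lambda$ that is \emph{uniform in $x$} and estimating the two pieces directly: $f^{0}$ in $L^{\infty}(X;V^{\infty})$ pointwise, and $f^{1}$ via the multi-scale jump sum, whose Lorentz norm is summed as a geometric series (the weight $\alpha^{-n(1-\theta)}$ appears only after the $L^{p,q}(X)$ norm is taken, which is what makes the higher scales harmless). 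If you replace your pointwise identity by that direct construction, your outline becomes the paper's proof.
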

As already mentioned here and later we consider finite totally ordered sets $\bbI$.
This ensures measurability of all functions that we define and allows us to use stopping time arguments.
All our estimates do not depend on the cardinality of $\bbI$ and therefore the passage to the limiting case of infinite index sets $\bbI$ will be permitted.
\begin{proof}
We begin with the first inequality in \eqref{eq:197} and normalize
\[
[L^{\infty}(X;V_{\bbI \to B}^{\infty}),L^{\theta p,\theta q}(X;V_{\bbI \to B}^{\theta\rho})]_{\theta,\infty}(f) = 1.
\]
By definition of real interpolation spaces this is equivalent to
\begin{equation}
\label{eq:K(Linfty(ellinfty),Lp/2(V1))}
K(\lambda^{1/\theta},f;L^{\infty}(X;V_{\bbI \to B}^{\infty}), L^{\theta p,\theta q}(X;V_{\bbI \to B}^{\theta\rho})) \lesssim \lambda
\end{equation}
for all $\lambda>0$.

For a fixed $\lambda>0$ we apply \eqref{eq:K(Linfty(ellinfty),Lp/2(V1))} with $\lambda$ replaced by $c\lambda$ with some small $c=c(p)$.
By definition of the $K$-functional there exists a splitting $f=f^{0}+f^{1}$ with
\begin{align*}
\norm{V^{\infty}(f^{0}(\cdot, t): t\in \bbI)}_{L^{\infty}(X)}
&< \lambda/2,\\
\norm{ V^{\theta\rho}(f^{1}(\cdot, t): t\in \bbI)}_{L^{\theta p,\theta q}(X)}
&\lesssim \lambda^{1-1/\theta}.
\end{align*}
Now, any $\lambda$-jump of $f$ corresponds to a $\lambda/2$-jump of $f^{1}$ in the sense that for every $x\in X$ and any increasing sequence $t_{0}<\dotsb<t_{J}$ in $\bbI$ as in \eqref{eq:def:jump} we have
\begin{align*}
\norm{f^{1}(x,t_{j})-f^{1}(x,t_{j-1})}_B
&\geq
\norm{f(x,t_{j})-f(x,t_{j-1})}_B
-
\norm{f^{0}(x,t_{j})-f^{0}(x,t_{j-1})}_B\\
&\geq
\lambda
-
V^{\infty}(f^{0}(x,t) : t\in\bbI)\\
&\geq
\lambda/2.
\end{align*}
Therefore by \eqref{eq:195} we get
\begin{align*}
N_{\lambda}(f(x, t): t\in \bbI)
&\leq
N_{\lambda/2}(f^{1}(x, t): t\in \bbI)\\
&\lesssim
\lambda^{-\theta\rho} V^{\theta\rho}(f^{1}(x, t): t\in \bbI)^{\theta\rho}.
\end{align*}
It follows that
\[
\norm{ \lambda N_{\lambda}^{1/\rho}(f(\cdot,t) : t \in \bbI) }_{L^{p,q}(X)}
\lesssim
\lambda^{1-\theta} \norm{ V^{\theta\rho}(f^{1}(\cdot,t) : t \in \bbI) }_{L^{\theta p,\theta q}(X)}^{\theta}
\lesssim
1.
\]
This proves the first inequality in \eqref{eq:197}.

We now prove the second inequality in \eqref{eq:197}.
Normalizing
\[
\sup_{\lambda>0} \norm{ \lambda N_{\lambda}^{1/\rho}(f(\cdot,t) : t\in\bbI) }_{L^{p,q}(X)} = 1
\]
we have to show \eqref{eq:K(Linfty(ellinfty),Lp/2(V1))}.
Fix $\lambda>0$ and construct stopping times (measurable functions) $t_{0},t_{1},\dotsc : X \to \bbI \cup \Set{+\infty}$ starting with $t_{0}(x) := \min \bbI$.
Given $t_{k}(x)$, let
\begin{equation}
\label{eq:lambda-jump-stopping-time}
t_{k+1}(x) := \min(\Set{t\in\bbI \given t>t_{k}(x) \text{ and } \norm{f(x,t)-f(x,t_{k}(x))}_{B}\geq\lambda} \cup \Set{+\infty}),
\end{equation}
with the convention that $+\infty$ is greater than every element of $\bbI$.
For $t\in \bbI$ let
\[
k(x,t) := \max \Set{k\in\N \given t_{k}(x) \leq t}.
\]
With this stopping time we split $f=f^{0}+f^{1}$, where
\[
f^{1}(x,t) := f(x,t_{k(x,t)}(x)),
\quad
f^{0}(x,t) := f(x,t) - f(x,t_{k(x,t)}(x)).
\]
By construction of the stopping time we have $\norm{f^{0}(x,t)}_{B}<\lambda$ for all $x\in X$ and $t\in\bbI$, so that
\[
\norm{V^{\infty}(f^{0}(\cdot,t) : t\in\bbI )}_{L^{\infty}(X)} \leq 2\lambda.
\]
On the other hand, $f^{1}(x,t)$ is constant for $t_{k}(x)\leq t<t_{k+1}(x)$, so while estimating its variation norm we can restrict the supremum in \eqref{eq:192} to sequences taking values in the sequence of stopping times $(t_{k}(x))_{k\in\N}$.
With $\alpha=\alpha(p,q,\rho)>1$ to be chosen later we split the jumps according to their size and obtain
\begin{align*}
\MoveEqLeft
V^{\theta\rho}(f^{1}(x, t_k(x)): k\in\N)^{\theta\rho}\\
&=
\sup_{k_0<\ldots<k_J} \sum_{i=1}^{J} \norm{f^1(x, t_{k_{i}}(x))-f^1(x, t_{k_{i-1}}(x))}_B^{\theta\rho}\\
&\leq
\sup_{k_0<\ldots<k_J} \Bigl( (\alpha \lambda)^{\theta\rho} \abs{\Set{i\in\N \given 0 < \norm{f^1(x, t_{k_{i}}(x))-f^1(x, t_{k_{i-1}}(x))}_B \leq \alpha\lambda}}\\
&\qquad+
\sum_{n>0} (\alpha^{n+1} \lambda)^{\theta\rho} \abs{\Set{i\in\N \given \alpha^n\lambda < \norm{f^1(x, t_{k_{i}}(x))-f^1(x, t_{k_{i-1}}(x))}_B \leq \alpha^{n+1}\lambda}} \Bigr)\\
&\leq
(\alpha\lambda)^{\theta\rho} N_{\lambda}(f(x, t): t\in \bbI) + \sum_{n>0} (\alpha^{n+1}\lambda)^{\theta\rho} N_{\alpha^{n}\lambda}(f(x, t): t\in \bbI)\\
&\lesssim
\lambda^{\theta\rho} \sum_{n\geq 0} \alpha^{n\theta\rho} N_{\alpha^{n}\lambda}(f(x, t): t\in \bbI).
\end{align*}
Hence
\begin{align*}
\MoveEqLeft
\norm[\big]{V^{\theta\rho}(f^{1}(\cdot, t_k(\cdot)): k\in\N)}_{L^{\theta p,\theta q}(X)}\\
&\lesssim
\lambda \norm[\Big]{ \Big(\sum_{n\geq 0} \alpha^{n\theta\rho}
N_{\alpha^{n}\lambda}(f(\cdot, t): t\in \bbI)\Big)^{1/(\theta\rho)} }_{L^{\theta p,\theta q}(X)}\\
&=
\lambda^{1-1/\theta} \norm[\Big]{ \sum_{n\geq 0} \alpha^{-n(1-\theta)\rho}
\Big( \underbrace{ \alpha^{n}\lambda N_{\alpha^{n}\lambda}(f(\cdot, t): t\in \bbI)^{1/\rho} }_{(*)} \Big)^{\rho} }_{L^{p/\rho,q/\rho}(X)}^{1/(\theta\rho)}
\end{align*}
By the hypothesis the $L^{p,q}(X)$ norm of the highlighted function $(*)$ is at most $1$, so the $L^{p/\rho,q/\rho}(X)$ norm of its $\rho$-th power is also $\leq 1$.
The series can be summed provided that $\alpha$ is sufficiently large in terms of the quasimetric constant of the scalar-valued Lorentz space $L^{p/\rho,q/\rho}(X)$.
Hence the splitting $f=f^{0}+f^{1}$ witnesses the inequality \eqref{eq:K(Linfty(ellinfty),Lp/2(V1))} for the $K$-functional.
\end{proof}

\begin{corollary}
\label{cor:pisier-xu-space-convex}
For every $p \in (1,\infty)$, $q \in (1,\infty]$, and $\rho \in (1,\infty)$ there exists a constant $0<C<\infty$ such that for every measure space $(X,\calB,\frakm)$, finite totally ordered set $\bbI$, and Banach space $B$ there exists a (subadditive) seminorm $\vertiii{\cdot}$ equivalent to the quasi-seminorm on $J^{p,q}_{\rho}(\cdot)$ in the sense that
\[
C^{-1} \vertiii{f} \leq J^{p,q}_{\rho}(f : X\times\bbI \to B) \leq C \vertiii{f}.
\]
for all measurable functions $f : X \times \bbI \to B$.
\end{corollary}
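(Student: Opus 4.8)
The plan is to realize $\vertiii{\cdot}$ as the $K$-functional norm of the real interpolation couple supplied by Lemma~\ref{lem:pisier-xu-space}, after first choosing the interpolation parameter close enough to $1$ that the couple consists of honest Banach spaces. Since $p,\rho\in(1,\infty)$ and $q\in(1,\infty]$, the number $\max(1/p,1/q,1/\rho)$ (with the convention $1/\infty:=0$) is strictly less than $1$, so I would fix once and for all a $\theta=\theta(p,q,\rho)\in\bigl(\max(1/p,1/q,1/\rho),1\bigr)$; then $\theta p>1$, $\theta\rho\ge 1$, and $\theta q\ge 1$. With this choice the building blocks become normable uniformly in the data: the $r$-variation quasi-seminorm $V^{\theta\rho}$ is subadditive by Minkowski's inequality (as $\theta\rho\ge 1$), so $V^{\theta\rho}_{\bbI\to B}$ is a Banach space; likewise $A_{0}:=L^{\infty}(X;V^{\infty}_{\bbI\to B})$ is a Banach space; and since $\theta p>1$ the scalar Lorentz space $L^{\theta p,\theta q}(X)$ admits an equivalent genuine norm (e.g.\ the one obtained by replacing $f^{*}$ by $f^{**}$ in its definition, cf.\ \cite{MR0482275}), with comparability constant depending only on $\theta p$. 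This induces an equivalent genuine norm $\norm{\cdot}_{A_{1}}$ on $A_{1}:=L^{\theta p,\theta q}(X;V^{\theta\rho}_{\bbI\to B})$, again with constant depending only on $\theta p$ and not on $X$ or $B$.

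Next I would simply \emph{define}
\[
\vertiii{f}:=\sup_{j\in\Z}2^{-j\theta}K\bigl(2^{j},f;A_{0},A_{1}\bigr),
\]
where the $K$-functional is formed using the genuine norms $\norm{\cdot}_{A_{0}}$ and $\norm{\cdot}_{A_{1}}$. For each fixed $t$ the functional $g\mapsto K(t,g;A_{0},A_{1})$ is an infimum over admissible splittings $g=g_{0}+g_{1}$ of the subadditive functionals $g\mapsto\norm{g_{0}}_{A_{0}}+t\norm{g_{1}}_{A_{1}}$, hence is itself subadditive; taking a supremum over $j$ preserves subadditivity, so $\vertiii{\cdot}$ is a subadditive seminorm (it annihilates functions that are constant in $t\in\bbI$, since such functions represent $0$ in $V^{\infty}_{\bbI\to B}$ and in $V^{\theta\rho}_{\bbI\to B}$). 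It then remains to verify $\vertiii{f}\simeq J^{p,q}_{\rho}(f)$ with a constant independent of $X,\bbI,B$, which I would obtain by concatenating two comparisons: passing from $\norm{\cdot}_{A_{1}}$ to the original Lorentz quasi-norm on $A_{1}$ changes $K(2^{j},f;A_{0},A_{1})$, and hence $\vertiii{f}$, by a factor bounded in terms of $\theta p$ only; and with the original quasi-norm the supremum $\sup_{j}2^{-j\theta}K(2^{j},f;\cdot)$ is exactly $[L^{\infty}(X;V^{\infty}_{\bbI\to B}),L^{\theta p,\theta q}(X;V^{\theta\rho}_{\bbI\to B})]_{\theta,\infty}(f)$, which Lemma~\ref{lem:pisier-xu-space} bounds above and below by $C_{p,q,\rho,\theta}^{\pm1}J^{p,q}_{\rho}(f)$. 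Since $\theta$ was chosen as a function of $p,q,\rho$ alone, the composite constant depends on $p,q,\rho$ only, as required.

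I do not expect a substantive obstacle: the whole point is that once $\theta$ is taken close enough to $1$, Lemma~\ref{lem:pisier-xu-space} exhibits $J^{p,q}_{\rho}$ as a quasi-norm equivalent to the $K$-method norm of a \emph{Banach} couple, and real interpolation of a Banach couple produces a normed space. The only points that need a little care are the standard normability of $L^{s,t}$ for $s>1$, $t\ge 1$, and the subadditivity of $V^{r}$ for $r\ge 1$; both are elementary, and — crucially — both are quantitatively uniform in the measure space $X$ and in the target Banach space $B$, which is exactly what makes the resulting constant $C$ in the statement depend only on $p,q,\rho$.
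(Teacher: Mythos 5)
Your proposal is correct and follows essentially the same route as the paper's proof: choose $\theta\in(\max(1/p,1/q,1/\rho),1)$ so that $V^{\theta\rho}_{\bbI\to B}$ is a genuine (semi)norm and $L^{\theta p,\theta q}(X;V^{\theta\rho}_{\bbI\to B})$ admits an equivalent norm, making the interpolation quasinorm in \eqref{eq:197} a norm equivalent to $J^{p,q}_{\rho}$ via Lemma~\ref{lem:pisier-xu-space}. You merely spell out in more detail the subadditivity of the $K$-functional and the uniformity of the normability constants, which the paper leaves implicit.
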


\begin{proof}
Let $\max(1/p,1/q,1/\rho) < \theta < 1$.
Then the quasinorm $V^{\theta\rho}_{\bbI \to B}$ is a norm, and the vector-valued Lorentz space $L^{\theta p,\theta q}(X; V^{\theta\rho}_{\bbI\to B})$ admits an equivalent norm (with equivalence constants depending only on $\theta p$ and $\theta q$).
Hence the interpolation quasinorm in \eqref{eq:197} is actually a norm.
\end{proof}

\subsection{From jump inequalities to $r$-variation}
It has been known since Bourgain's article \cite{MR1019960} that variational estimates can be deduced from jump inequalities.
In Bourgain's article this is accomplished for averaging operators by interpolation with an $L^{\infty}$ estimate.
More in general, Jones, Seeger, and Wright showed that $L^{p}$ jump inequalities with different values of $p$ can be interpolated to yield variation norm estimates \cite[Lemma 2.1]{MR2434308}.
Our next result is that a weak type jump inequality implies a weak type estimate for the variation seminorm for a fixed $p$.
\begin{lemma}
\label{lem:jumps-control-variation}
For every $p\in[1, \infty]$ and $1\leq\rho<\infty$ there exists $0<C_{p,\rho}<\infty$ such that the following holds.
Let $(X, \calB, \frakm)$ be a $\sigma$-finite measure space, $\bbI$ a finite totally ordered set, $B$ a Banach space, and $\rho < r \leq \infty$.
Then for every measurable function $f : X \times \bbI \to B$ we have the estimates
\begin{equation}
\label{eq:Vr<N}
\norm{f}_{L^{p,\infty}(X; V^{r}_{\bbI \to B})}
\le C_{p,\rho}
\begin{cases}
\big(\frac{r}{r-\rho}\big)^{1/p} J^{p,\infty}_{\rho}(f)
& \text{if } p<\rho,\\
\big(\frac{r}{r-\rho} \big(1 + \log \frac{r}{r-\rho} \big) \big)^{1/\rho} J^{p,\infty}_{\rho}(f)
& \text{if } p=\rho \text{ and}\\
\big(\frac{r}{r-\rho} \big)^{1/\rho} J^{p}_{\rho}(f)
& \text{if } p=\rho,\\
\big(\frac{r}{r-\rho}\big)^{1/\rho} J^{p,\infty}_{\rho}(f)
& \text{if } p>\rho.
\end{cases}
\end{equation}
\end{lemma}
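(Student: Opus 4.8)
The plan is to first establish a deterministic pointwise estimate bounding the $r$-variation of a single sequence $g:\bbI\to B$ by its jump counting functions, and then to take $L^{p,\infty}(X)$ norms using the hypothesis.

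For the pointwise estimate I would group, along any increasing chain $t_0<\dots<t_J$ in $\bbI$, the increments of $g$ by their dyadic size. The only combinatorial input needed is that along any such chain the number of increments whose $B$-norm is $\ge\lambda$ is $\lesssim N_{\lambda/2}(g)$: a connecting increment of norm $<\lambda/2$ lying between two increments of norm $\ge\lambda$ can be merged into the increment following it (by the triangle inequality the merged increment still has norm $\ge\lambda/2$), while a connecting increment of norm $\ge\lambda/2$ is itself large, so after these merges one is left with a chain all of whose increments have norm $\ge\lambda/2$ and which is at least as long as the number of original large increments. Hence the $i$-th largest increment of any chain is $\lesssim\tau_i(g)$, where $\tau_m(g):=\inf\{\lambda>0:N_\lambda(g)<m\}$ is the $m$-th largest jump, so $V^r(g)\lesssim\bigl(\sum_{m\ge1}\tau_m(g)^r\bigr)^{1/r}$ with an absolute constant. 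Grouping the indices $m$ dyadically, using monotonicity of $m\mapsto\tau_m(g)$ and the elementary inequality $\bigl(\sum_l a_l\bigr)^{\rho/r}\le\sum_l a_l^{\rho/r}$ (valid since $\rho<r$), this becomes the pointwise bound
\[
V^r(g)^\rho\lesssim\sum_{l\ge0}2^{-l\sigma}\bigl(2^{l/\rho}\tau_{2^l}(g)\bigr)^\rho,\qquad\sigma:=\frac{r-\rho}{r}\in(0,1).
\]
(This refines \cite[Lemma~2.1]{MR2434308}.)

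Now set $\Lambda:=J^{p,\infty}_\rho(f)$ and $W_l(x):=\bigl(2^{l/\rho}\tau_{2^l}(f(x,\cdot))\bigr)^\rho$. Since $\{x:\tau_m(f(x,\cdot))>t\}\subseteq\{x:N_t(f(x,\cdot))\ge m\}\subseteq\{x:tN_t(f(x,\cdot))^{1/\rho}\ge tm^{1/\rho}\}$, Chebyshev's inequality gives $\|\tau_m(f(\cdot))\|_{L^{p,\infty}(X)}\lesssim\Lambda\,m^{-1/\rho}$, hence $\|W_l\|_{L^{p/\rho,\infty}(X)}\lesssim\Lambda^\rho$ uniformly in $l$. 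By the pointwise bound,
\[
\|V^r(f(\cdot))\|_{L^{p,\infty}(X)}^\rho=\bigl\|V^r(f(\cdot))^\rho\bigr\|_{L^{p/\rho,\infty}(X)}\lesssim\Bigl\|\sum_{l\ge0}2^{-l\sigma}W_l\Bigr\|_{L^{p/\rho,\infty}(X)},
\]
so it remains to sum a geometrically decaying series of nonnegative functions in $L^{p/\rho,\infty}(X)$, and the three cases in the statement are precisely the three positions of $p/\rho$ relative to $1$. If $p>\rho$ the space $L^{p/\rho,\infty}(X)$ is normable, the geometric series sums with cost $\lesssim\sigma^{-1}$, and one gets the exponent $1/\rho$ on $\tfrac{r}{r-\rho}$. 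If $p=\rho$ the space is weak $L^1$, which is not normable; a geometric series of functions with uniformly bounded weak-$L^1$ norms sums only with cost $\lesssim\sigma^{-1}\bigl(1+\log\sigma^{-1}\bigr)$, which produces the logarithm — but under the stronger hypothesis $J^p_\rho(f)$ one has $W_l\in L^1(X)$ with $\|W_l\|_{L^1(X)}\lesssim J^p_\rho(f)^\rho$, and the ordinary triangle inequality in $L^1(X)$ removes the log. If $p<\rho$ no triangle-type inequality is available; here one argues directly on the super-level sets $\{x:\sum_l2^{-l\sigma}W_l(x)>\alpha\}$, using that the $W_l$ are built from the monotone sequence $m\mapsto\tau_m(f(x,\cdot))$ (which rules out the disjoint-support configurations that would make a weak-$L^{<1}$ series diverge), and the summation costs $\lesssim\sigma^{-\rho/p}$, i.e.\ the exponent $1/p$.

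The hard part is this weak-type summation over scales with sharp tracking of constants: in particular the weak-$L^1$ endpoint $p=\rho$, where one must localize the super-level sets of $\sum_l2^{-l\sigma}W_l$ — splitting off the contribution of the scales with $2^l$ comparable to $\alpha^\rho$, which lives on a set of $\frakm$-measure $\lesssim(\Lambda/\alpha)^p$ — so that only the stated logarithm, and no spurious power of $\tfrac{r}{r-\rho}$, is incurred; and the case $p<\rho$, where the absence of a quasi-triangle inequality in $L^{p/\rho,\infty}(X)$ forces the direct argument using monotonicity of the $\tau_m$. One must also check that no power of $\frakm(X)$ ever enters, since $X$ is only $\sigma$-finite. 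The deterministic first step is classical; essentially all the difficulty is in this bookkeeping. (For $p\ge\rho$ one can alternatively route the argument through Lemma~\ref{lem:pisier-xu-space}, realizing $L^{p,\infty}(X;V^r_{\bbI\to B})$ as containing a suitable real interpolation space between $L^\infty(X;V^\infty_{\bbI\to B})$ and $L^{\theta p,\infty}(X;V^{\theta\rho}_{\bbI\to B})$, the factor $\tfrac{r}{r-\rho}$ then coming from the embedding of that interpolation space into $L^{p,\infty}(X;V^r_{\bbI\to B})$; but the endpoint $p=\rho$ still needs the care above.)
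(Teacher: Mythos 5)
Your overall architecture (a pointwise bound on $V^r$ via a decomposition of the jump data, followed by three weak-type summation lemmas according to the position of $p/\rho$ relative to $1$) parallels the paper's, and the first, second and fourth cases go through: the merging argument, the bound $\norm{\tau_m(f(\cdot,\cdot))}_{L^{p,\infty}(X)}\lesssim\Lambda m^{-1/\rho}$ and hence $\norm{W_l}_{L^{p/\rho,\infty}(X)}\lesssim\Lambda^{\rho}$, and the summation costs $\sigma^{-1}$, $\sigma^{-1}(1+\log\sigma^{-1})$, $\sigma^{-\rho/p}$ are all correct. The third case, however, contains a false step: it is not true that $\norm{W_l}_{L^1(X)}\lesssim J^{p}_{\rho}(f)^{\rho}$ when $p=\rho$. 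Counterexample with $p=\rho=1$: let $X=\bigsqcup_{i=1}^{I}X_i$ with $\frakm(X_i)=2^{i}/I$, and for $x\in X_i$ set $f(x,t)=2^{-i}(t\bmod 2)\,e$ on $\bbI=\{0,\dots,2m\}$, $e$ a unit vector and $m$ a power of $2$. Then $N_{\lambda}(f(x,\cdot))=2m\cdot\one\{\lambda\le 2^{-i}\}$ for $x\in X_i$, so $J^{1}_{1}(f)=\sup_{\lambda}2m\lambda\sum_{i:2^{-i}\ge\lambda}2^{i}/I\le 4m/I$, whereas $\tau_{m}(x)=2^{-i}$ on $X_i$ gives $\norm{W_{\log_2 m}}_{L^1(X)}=m\sum_{i=1}^{I}2^{-i}\cdot 2^{i}/I=m$, exceeding $J^{1}_{1}(f)$ by the unbounded factor $I/4$. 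The obstruction is structural: $\tau_{2^l}(x)$ corresponds to a \emph{different} threshold $\lambda$ for each $x$, so only its weak $L^{p}$ norm, not its strong $L^{p}$ norm, is controlled by $\sup_{\lambda}\norm{\lambda N_{\lambda}^{1/\rho}}_{L^{p}}$. The repair is to index the decomposition by jump size rather than jump count, which is what the paper does: after normalizing and discarding the set where some increment is $\ge 1$ (of measure $\le A^{p}$), one has $V^{r}(f(x,\cdot))^{r}\le\sum_{j<0}2^{(j+1)r}N_{2^{j}}(f(x,\cdot))$, and each summand obeys the strong bound $\norm{2^{jr}N_{2^{j}}}_{L^{1}(X)}=2^{j(r-\rho)}\norm{2^{j}N_{2^{j}}^{1/\rho}}_{L^{\rho}(X)}^{\rho}\le 2^{j(r-\rho)}J^{\rho}_{\rho}(f)^{\rho}$, so the series sums to $(r-\rho)^{-1}J^{\rho}_{\rho}(f)^{\rho}$ by the triangle inequality in $L^{1}$.

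A smaller point: for $p<\rho$ you assert that no triangle-type inequality is available and fall back on an unspecified direct argument exploiting monotonicity of $m\mapsto\tau_m$. In fact $L^{q,\infty}$ is $q$-convex for $0<q<1$, i.e.\ $\norm{\sum_j g_j}_{L^{q,\infty}}^{q}\lesssim_{q}\sum_j\norm{g_j}_{L^{q,\infty}}^{q}$ for arbitrary nonnegative $g_j$ (this is \eqref{lem:lpinf-quasi-triangle}, proved by the same splitting that yields the $L^{1,\infty}$ log-convexity); applying it with $q=p/\rho$ gives precisely your claimed cost $\sigma^{-\rho/p}$, with no monotonicity needed. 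Your $p>\rho$ case and the logarithmic $p=\rho$ case are fine as written.
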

The previous result \cite[Lemma 2.1]{MR2434308} can be recovered by scalar-valued real interpolation.
Moreover, Lemma~\ref{lem:jumps-control-variation} reduces L\'epingle's inequality at the endpoint $p=1$ to the jump inequality \eqref{eq:L1-lepingle}.

In the case $p=\rho$ we will use log-convexity of $L^{1,\infty}$.
More precisely, let $(X,\calB,\frakm)$ be a measure space, $I$ a countable set, and $g_{j} : X \to \R$ measurable functions with $\norm{g_{j}}_{L^{1,\infty}(X)} \leq a_{j}$ for every $j\in I$, where $(a_{j})_{j\in I}\subset (0, \infty)$ are positive numbers.

Then from \cite[Lemma 2.3]{MR0241685} we know
\begin{equation}
\label{lem:l1inf-quasi-triangle}
\norm[\Big]{ \sum_{j\in I} g_{j} }_{L^{1,\infty}(X)}
\leq 2
\sum_{j\in I} a_{j} \Big(\log \Big({a_{j}}^{-1}\sum_{j'\in I}{a_{j'}} \Big)+2\Big).
\end{equation}
The same argument shows that the spaces $L^{p,\infty}$ are $p$-convex for $p\in(0, 1)$. 
\begin{lemma}
Given a measure space $(X,\calB,\frakm)$, let $p\in (0,1)$,  $I$ be a countable set, and let $g_{j} : X \to [0,\infty)$ be measurable functions in $L^{p, \infty}(X)$ for every $j\in I$. Then
\begin{equation}
\label{lem:lpinf-quasi-triangle}
\norm[\Big]{\sum_{j\in I} g_{j}}_{L^{p,\infty}(X)}^{p}
\lesssim_{p}
\sum_{j\in I} \norm{ g_{j} }_{L^{p,\infty}(X)}^{p}.
\end{equation}
\end{lemma}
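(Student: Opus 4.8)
The plan is to adapt the splitting argument behind \eqref{lem:l1inf-quasi-triangle}, taking advantage of the fact that in the subcritical range $p<1$ there is room for an auxiliary exponent strictly between $p$ and $1$; this is exactly what removes the logarithm present at $p=1$. We may assume $M:=\sum_{j\in I}\norm{g_j}_{L^{p,\infty}(X)}^{p}<\infty$, since otherwise \eqref{lem:lpinf-quasi-triangle} is trivial, and we abbreviate $a_j^{p}:=\norm{g_j}_{L^{p,\infty}(X)}^{p}$, so that $\frakm(\Set{g_j>s})\le a_j^{p}s^{-p}$ for every $s>0$ and every $j\in I$. Since $\norm{\sum_{j}g_j}_{L^{p,\infty}(X)}^{p}=\sup_{\lambda>0}\lambda^{p}\frakm(\Set{\sum_{j}g_j>\lambda})$, it is enough to fix $\lambda>0$ and prove $\lambda^{p}\frakm(\Set{\sum_{j}g_j>\lambda})\lesssim_{p}M$. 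Fix once and for all $r:=(1+p)/2\in(p,1)$.

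First I would split each $g_j$ at height $\lambda$, writing $g_j=g_j\one_{\Set{g_j>\lambda}}+g_j\one_{\Set{g_j\le\lambda}}$. If $\sum_{j}g_j(x)>\lambda$, then at $x$ at least one of the partial sums $\sum_{j}g_j\one_{\Set{g_j>\lambda}}$ and $\sum_{j}g_j\one_{\Set{g_j\le\lambda}}$ exceeds $\lambda/2$, so it suffices to bound the measures of the two corresponding superlevel sets. The first one is contained in $\bigcup_{j}\Set{g_j>\lambda}$, because a positive value of $\sum_{j}g_j\one_{\Set{g_j>\lambda}}$ forces $g_j>\lambda$ for some $j$; hence its measure is at most $\sum_{j}\frakm(\Set{g_j>\lambda})\le\lambda^{-p}\sum_{j}a_j^{p}=\lambda^{-p}M$.

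For the truncated part I would apply Chebyshev's inequality in $L^{r}$, so the remaining superlevel set has measure at most $(2/\lambda)^{r}\int_X\big(\sum_{j}g_j\one_{\Set{g_j\le\lambda}}\big)^{r}\,\dif\frakm$. Because $0<r<1$, the pointwise bound $(\sum_{j}x_j)^{r}\le\sum_{j}x_j^{r}$ for nonnegative reals together with Tonelli's theorem reduces this to $(2/\lambda)^{r}\sum_{j}\int_X\big(g_j\one_{\Set{g_j\le\lambda}}\big)^{r}\,\dif\frakm$, and the layer-cake formula gives $\int_X\big(g_j\one_{\Set{g_j\le\lambda}}\big)^{r}\,\dif\frakm=\int_0^{\lambda}rs^{r-1}\frakm(\Set{s<g_j\le\lambda})\,\dif s\le ra_j^{p}\int_0^{\lambda}s^{r-1-p}\,\dif s=\tfrac{r}{r-p}a_j^{p}\lambda^{r-p}$, the last integral converging precisely because $r>p$. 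Summing over $j$ bounds the truncated part by $\tfrac{2^{r}r}{r-p}\lambda^{-p}M$, and adding the two estimates gives $\lambda^{p}\frakm(\Set{\sum_{j}g_j>\lambda})\le\big(1+\tfrac{2^{r}r}{r-p}\big)M$. With $r=(1+p)/2$ the constant is finite and depends only on $p$, which proves \eqref{lem:lpinf-quasi-triangle}.

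I do not anticipate a real obstacle; the only point needing care is the choice of the intermediate exponent $r$. Requiring $r<1$ keeps the pointwise inequality $(\sum_{j}x_j)^{r}\le\sum_{j}x_j^{r}$ available, which is exactly what lets us pull the sum outside the integral with no loss, avoiding the Minkowski step—and the attendant logarithmic cost—that is forced when $p=1$; requiring $r>p$ is what makes $\int_0^{\lambda}s^{r-1-p}\,\dif s$ converge. These two requirements are compatible precisely in the range $0<p<1$.
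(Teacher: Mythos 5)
Your argument is correct, and it takes a genuinely different route from the paper's. The paper's proof deliberately mirrors the argument for the $L^{1,\infty}$ log-convexity inequality \eqref{lem:l1inf-quasi-triangle}: after normalizing to $\sum_j c_j g_j$ with $\frakm(\{g_j>s\})\le s^{-p}$ and $\sum_j c_j^p\le 1$, it splits each summand into \emph{three} pieces --- a low part $\min(g_j,s/2)$ whose weighted sum is at most $s/2$ pointwise because $\sum_j c_j\le(\sum_j c_j^p)^{1/p}\le 1$, a high part supported on a set of measure at most $s^{-p}$, and a middle part whose $L^1(X)$ norm is $O(s^{1-p})$ and which is dispatched by Markov's inequality in $L^1$. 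There the hypothesis $p<1$ enters twice, in the $\ell^p\hookrightarrow\ell^1$ bound on the coefficients and in the convergence of $\int_0^T y^{-p}\,\dif y$. You instead use a \emph{two}-piece truncation of each $g_j$ at height $\lambda$, control the high part by a union bound on its support, and control the low part by Chebyshev in $L^r$ with an intermediate exponent $r\in(p,1)$, using subadditivity of $t\mapsto t^r$ to pull the sum out of the integral. This concentrates the role of $p<1$ in a single place (the existence of such an $r$), which is exactly the mechanism that removes the logarithm unavoidable at $p=1$, whereas the paper's version has the virtue of being verbatim the $L^{1,\infty}$ argument with the logarithmic term deleted (hence the paper's phrasing ``the same argument shows''). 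Your individual steps --- the inclusion of the high-part superlevel set in $\bigcup_j\{g_j>\lambda\}$, the layer-cake bound $\int_X\bigl(g_j\one_{\{g_j\le\lambda\}}\bigr)^r\dif\frakm\le\tfrac{r}{r-p}a_j^p\lambda^{r-p}$, and the final constant $1+2^r r/(r-p)$ with $r=(1+p)/2$ --- all check out and depend only on $p$, as required.
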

\begin{proof}
By scaling it suffices to show that if $\frakm\big({\Set{x\in X\given g_{j}(x) > s}}\big) \leq s^{-p}$ for all $s>0$ and $j\in I$ and $c_{j}\geq 0$ are numbers such that $\sum_{j \in I} c_{j}^{p} \leq 1$, then
\[
\frakm\big({\Set{x\in X\given \sum_{j\in I}c_{j} g_{j}(x) > s}}\big)
\lesssim_{p}
s^{-p} \quad \text{for all}\quad s>0.
\]
Without loss of generality we may assume $c_{j}>0$ for all $j\in I$.
Let
\[
u_{j} = (g_{j}-s/2) \one_{{\Set{x\in X\given g_{j}(x) > s/c_j}}},
\qquad
l_{j} = \min( g_{j}, s/2 ),
\qquad
m_{j} = g_{j} - u_{j} - l_{j}.
\]
Then
\[
\sum_{j\in I} c_{j} l_{j}
\leq
\frac{s}{2} \sum_{j\in I} c_{j}
\leq
\frac{s}{2} \Big( \sum_{j\in I} c_{j}^{p} \Big)^{1/p}
\leq
\frac{s}{2},
\]
\[
\frakm\Big(\bigcup_{j\in I}\supp u_{j}\Big)
\leq
\sum_{j\in I} \frakm\big({\Set{x\in X\given g_{j}(x) > s/c_j}}\big)
\leq
\sum_{j\in I} (s/c_{j})^{-p}
=
s^{-p} \sum_{j\in I} c_{j}^{p}
\leq
s^{-p},
\]
and
\begin{multline*}
\int_X \sum_{j\in I} c_{j} m_{j}(x)\dif \frakm(x)
=
\sum_{j\in I} c_{j} \int_{s/2}^{s/c_{j}} \frakm\big({\Set{x\in X\given m_{j}(x) > y }}\big) \dif y\\
\leq
\sum_{j\in I} c_{j} \int_{0}^{s/c_{j}} y^{-p} \dif y
=
\frac{1}{1-p} \sum_{j\in I} c_{j} (s/c_{j})^{1-p}
\leq
\frac{s^{1-p}}{1-p},
\end{multline*}
so that
\begin{align*}
\frakm \big(\Set{x\in X\given \sum_{j\in I}c_{j}g_{j}(x) > s }\big)
&\leq
s^{-p} + \frakm \big(\Set{x\in X\given \sum_{j\in I}c_{j}m_{j}(x) > s/2 }\big)\\
&\leq
s^{-p} + \frac{2}{s} \int_X \sum_{j\in I}c_{j}m_{j}(x)\dif\frakm(x)\\
&\leq
\Bigl( 1 + \frac{2}{1-p} \Bigr) s^{-p}.
\qedhere  
\end{align*}
\end{proof}

We will deduce Lemma~\ref{lem:jumps-control-variation} from a more general result for sequences that is also useful in the context of paraproducts \cite{MR2949622,arxiv:1812.09763} where variation and jump seminorms are defined differently.
For $p,q \in (0,\infty]$, $\rho \in (0,\infty)$, and a measurable function $F : X \times \N \to [0,\infty)$ we write
\[
\calJ^{p,q}_{\rho}(F) := \sup_{\lambda > 0} \norm{\lambda \calN_{\lambda}^{1/\rho}}_{L^{p,q}(X)},
\quad
\calN_{\lambda}(x) := \card{\Set{n \given F(x,n) \geq \lambda}}.
\]
\begin{lemma}
\label{lem:jumps-control-variation:difference}
For every $p\in (0, \infty]$ and $\rho \in (0,\infty)$ there exists $0<C_{p,\rho}<\infty$ such that the following holds.
Let $r \in (\rho, \infty)$.
Then for every $\sigma$-finite measure space $(X, \calB, \frakm)$ and every measurable function $F : X \times \N \to [0,\infty)$ we have the estimates
\begin{equation}
\label{eq:Vr<N:difference}
\norm{F}_{L^{p,\infty}(X; \ell^{r})}
\le C_{p,\rho}
\begin{cases}
\big(\frac{r}{r-\rho}\big)^{1/p} \calJ^{p,\infty}_{\rho}(F)
& \text{if } p<\rho,\\
\big(\frac{r}{r-\rho} \big(1 + \log \frac{r}{r-\rho} \big) \big)^{1/\rho} \calJ^{p,\infty}_{\rho}(F)
& \text{if } p=\rho \text{ and}\\
\big(\frac{r}{r-\rho} \big)^{1/\rho} \calJ^{p,p}_{\rho}(F)
& \text{if } p=\rho,\\
\big(\frac{r}{r-\rho}\big)^{1/\rho} \calJ^{p,\infty}_{\rho}(F)
& \text{if } p>\rho.
\end{cases}
\end{equation}
\end{lemma}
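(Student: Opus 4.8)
The plan is to estimate $\norm{F(x,\cdot)}_{\ell^r}$ pointwise by a \emph{one‑sided} geometric series of ``layer'' functions whose $L^{p,\infty}(X)$ norms are each controlled by $\calJ^{p,\infty}_{\rho}(F)$, and then to sum this series in $L^{p/\rho,\infty}(X)$; the position of $p/\rho$ relative to $1$ is what produces the trichotomy in the statement. Concretely, writing $F^{*}(x,\cdot)$ for the non‑increasing rearrangement of $(F(x,n))_{n\in\N}$ and setting $\beta_{m}(x):=2^{m/\rho}F^{*}(x,2^{m})$ for $m\in\N$, I would first observe, by grouping the ranks $j$ dyadically,
\[
\norm{F(x,\cdot)}_{\ell^{r}}^{r}=\sum_{j\ge1}F^{*}(x,j)^{r}\le\sum_{m\ge0}2^{m}F^{*}(x,2^{m})^{r}=\sum_{m\ge0}2^{-m\delta}\beta_{m}(x)^{r},\qquad\delta:=\tfrac{r}{\rho}-1>0,
\]
and hence, since $\rho/r<1$, by subadditivity of $t\mapsto t^{\rho/r}$,
\[
\norm{F(x,\cdot)}_{\ell^{r}}^{\rho}\le\sum_{m\ge0}2^{-mu}\beta_{m}(x)^{\rho},\qquad u:=1-\rho/r\in(0,1).
\]

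The key point, which I would prove next, is that each $\beta_{m}$ behaves like a single level of the jump function: for fixed $s>0$ one has $\Set{\beta_{m}>s}=\Set{F^{*}(\cdot,2^{m})>s2^{-m/\rho}}\subseteq\Set{\calN_{s2^{-m/\rho}}\ge2^{m}}=\Set{\lambda\calN_{\lambda}^{1/\rho}\ge s}$ with $\lambda=s2^{-m/\rho}$, whence $\norm{\beta_{m}}_{L^{p,\infty}(X)}\le 2\,\calJ^{p,\infty}_{\rho}(F)=:2A$ for every $m$, uniformly. Using $\norm{g}_{L^{p,\infty}}^{\rho}=\norm{g^{\rho}}_{L^{p/\rho,\infty}}$ together with the pointwise bound gives $\norm{F}_{L^{p,\infty}(X;\ell^{r})}^{\rho}\le\bignorm{\sum_{m\ge0}2^{-mu}\beta_{m}^{\rho}}_{L^{p/\rho,\infty}(X)}$, and now I would split into the three regimes. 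If $p>\rho$, the space $L^{p/\rho,\infty}$ carries an equivalent norm, so the triangle inequality and $\sum_{m\ge0}2^{-mu}\lesssim u^{-1}=r/(r-\rho)$ give the bound $A\,(r/(r-\rho))^{1/\rho}$. If $p<\rho$, I would use instead the $(p/\rho)$‑subadditivity of $L^{p/\rho,\infty}$, i.e. \eqref{lem:lpinf-quasi-triangle}, to get $\sum_{m}2^{-mup/\rho}\norm{\beta_{m}}_{L^{p,\infty}}^{p}\lesssim_{p,\rho}A^{p}\,r/(r-\rho)$, hence the bound $A\,(r/(r-\rho))^{1/p}$. If $p=\rho$, the target space is $L^{1,\infty}$ and the quasi‑triangle inequality \eqref{lem:l1inf-quasi-triangle}, applied with the geometrically decaying masses $a_{m}\simeq2^{-mu}A^{\rho}$, yields $\sum_{m}a_{m}(\log(a_{m}^{-1}\sum_{m'}a_{m'})+2)\lesssim A^{\rho}u^{-1}(1+\log u^{-1})$, which is precisely the $\calJ^{p,\infty}_{\rho}$ bound claimed at $p=\rho$. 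Throughout, one reduces to $F$ finitely supported in $n$ by monotone convergence, both sides being monotone in the support.

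The remaining, improved, estimate at $p=\rho$ under the stronger hypothesis $\calJ^{p,p}_{\rho}$ I would prove separately and directly, since the logarithm above is an artefact of summing in $L^{1,\infty}$. Here one uses that $\calJ^{\rho,\rho}_{\rho}(F)\le A$ is equivalent to $\int_{X}\calN_{\lambda}\,\dif\frakm\le A^{\rho}\lambda^{-\rho}$ for all $\lambda>0$. Fix $s>0$ and split $F=F\one_{\Set{F\le s}}+F\one_{\Set{F>s}}$; the two summands have disjoint supports in $n$, so $\norm{F(x,\cdot)}_{\ell^{r}}^{r}=\norm{F(x,\cdot)\one_{\Set{F\le s}}}_{\ell^{r}}^{r}+\norm{F(x,\cdot)\one_{\Set{F>s}}}_{\ell^{r}}^{r}$ with \emph{no} constant loss. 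For the large part, $\norm{F(x,\cdot)\one_{\Set{F>s}}}_{\ell^{r}}>0$ only on $\Set{\calN_{s}\ge1}$, a set of measure $\le\int_{X}\calN_{s}\,\dif\frakm\le A^{\rho}s^{-\rho}$. For the small part, a layer‑cake computation and Tonelli give $\int_{X}\norm{F(x,\cdot)\one_{\Set{F\le s}}}_{\ell^{r}}^{r}\,\dif\frakm\le r\int_{0}^{s}t^{r-1}\int_{X}\calN_{t}\,\dif\frakm\,\dif t\le\frac{r}{r-\rho}A^{\rho}s^{r-\rho}$ (this is where $r>\rho$ is used, to make the $t$‑integral converge), so Chebyshev yields $\frakm\bigl(\norm{F(x,\cdot)\one_{\Set{F\le s}}}_{\ell^{r}}^{r}>s^{r}/2\bigr)\lesssim\frac{r}{r-\rho}A^{\rho}s^{-\rho}$. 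Adding the two contributions gives $s^{\rho}\frakm\bigl(\norm{F(x,\cdot)}_{\ell^{r}}>s\bigr)\lesssim\frac{r}{r-\rho}A^{\rho}$, i.e. the asserted bound.

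The main obstacle is the bookkeeping in the first step: a naive dyadic decomposition in the level $\lambda$ produces a \emph{two}‑sided sum $\sum_{k\in\Z}2^{kr}\calN_{2^{k}}$ whose terms have $L^{p/r,\infty}(X)$ norm at most $A^{r}$ but with no decay in $k$, and so cannot be summed; re‑indexing the scales relative to the top scale $F^{*}(x,1)$, as encoded by the $\beta_{m}$, turns it into a convergent one‑sided geometric series. The second delicate point is the $p=\rho$ endpoint: there one must either accept the logarithmic loss (for the $\calJ^{p,\infty}_{\rho}$ bound) or bypass the interpolation‑style summation entirely, via the disjoint‑support splitting above, to obtain the sharper $\calJ^{p,p}_{\rho}$ bound.
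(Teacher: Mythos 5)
Your proof is correct, and its core mechanism -- converting $\norm{F(x,\cdot)}_{\ell^r}$ into a one-sided geometric series of jump-count terms and summing that series in $L^{p/\rho,\infty}(X)$ by normability when $p>\rho$, by the $p/\rho$-convexity inequality \eqref{lem:lpinf-quasi-triangle} when $p<\rho$, and by the log-convexity inequality \eqref{lem:l1inf-quasi-triangle} when $p=\rho$ -- is exactly the paper's strategy. The differences are in how the one-sided sum is produced and in the strong-type endpoint. The paper normalizes the threshold to $1$ by scaling, discards the set where $\norm{F(x,\cdot)}_{\ell^\infty}\ge 1$ (whose measure is controlled directly by $\calN_1$), and then only levels $2^j$ with $j<0$ survive; you instead index by rank via the rearrangement, setting $\beta_m=2^{m/\rho}F^*(\cdot,2^m)$, which builds the decay $2^{-m(1-\rho/r)}$ into the decomposition from the outset, avoids both the scaling step and the $\ell^\infty$ cutoff, and works uniformly in $r$ so that the paper's preliminary reduction to $r\le 2\rho$ is unnecessary. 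Your inclusion $\Set{\beta_m>s}\subseteq\Set{\lambda\calN_\lambda^{1/\rho}\ge s}$ with $\lambda=s2^{-m/\rho}$ plays precisely the role of the paper's identity $2^{jr}\norm{\calN_{2^j}}_{L^{p/\rho,\infty}}=2^{j(r-\rho)}\norm{2^j\calN_{2^j}^{1/\rho}}_{L^{p,\infty}}^{\rho}$, and your constants in all three regimes match the statement. For the $\calJ^{p,p}_\rho$ bound at $p=\rho$ you replace the paper's dyadic summation in $L^1(X)$ by a direct layer-cake computation with a split of $F$ at the target level $s$; this is a slightly cleaner route to the same factor $r/(r-\rho)$ and correctly isolates where $r>\rho$ enters. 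I see no gaps.
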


\begin{proof}[Proof of Lemma~\ref{lem:jumps-control-variation} assuming Lemma~\ref{lem:jumps-control-variation:difference}]
Since $\bbI$ is finite, for every $x$ the supremum in the definition \eqref{eq:192} of $r$-variation is assumed for some increasing sequence $(t_{x,j})_{j}$.
We may assume that this sequence depends measurably on $x$.
Let $F(x,j) := \norm{f(x,t_{x,j+1})-f(x,t_{x,j})}_{B}$ and continue this sequence by $0$ for those $j$ for which $t_{x,j+1}$ is not defined.
Then $\norm{f}_{L^{p,\infty}(X; V^{r}_{\bbI \to B})} = \norm{F}_{L^{p,\infty}(X; \ell^{r})}$ and $\calJ^{p,q}_{\rho}(F) \leq J^{p,q}_{\rho}(f)$.
\end{proof}

\begin{proof}[Proof of Lemma~\ref{lem:jumps-control-variation:difference}]
By monotonicity of $\ell^{r}$ norms it suffices to consider $\rho < r \leq 2\rho$.
By scaling we may replace the $p$-th power of the left-hand side of \eqref{eq:Vr<N} by
\[
\frakm \big(\Set{x\in X \given \norm{F(x,\cdot)}_{\ell^{r}} > 1}\big).
\]
Let
\[
A := \calJ^{p,\infty}_{\rho}(F) =
\sup_{\lambda>0} \norm[\big]{ \lambda \calN_{\lambda}^{1/\rho} }_{L^{p,\infty}(X)}.
\]
Note that
\begin{align*}
\frakm\big(\Set{x\in X \given \norm{F(x,\cdot)}_{\ell^{\infty}} \ge 1}\big)
&=\frakm\big(\Set{x\in X \given \calN_{1}(x) \ge 1}\big)\\
&\leq
\norm{ 1 \cdot N_{1}^{1/\rho} }_{L^{p,\infty}(X)}^{p}
\leq
A^{p}.
\end{align*}
Therefore, it remains to estimate the measure of the set
\[
X' := \Set{x\in X \given \norm{F(x,\cdot)}_{\ell^{r}} > 1 > \norm{F(x,\cdot)}_{\ell^{\infty}} }.
\]
For $x\in X'$ we have
\[
\norm{F(x, \cdot)}_{\ell^{r}}^r
\leq
\sum_{j<0} 2^{(j+1) r} \calN_{2^{j}}(x)
\]
which yields
\begin{equation}
\label{eq:194}
\frakm(X')
\le
\frakm\big(\Set{x\in X \given \sum_{j<0} 2^{jr} \calN_{2^{j}}(x) > 2^{-2\rho}}\big)
\lesssim_{p,\rho}
\norm[\Big]{ \sum_{j<0} 2^{jr} \calN_{2^{j}} }_{L^{p/\rho,\infty}(X)}^{p/\rho}.
\end{equation}
We distinguish three cases to estimate \eqref{eq:194}.
Suppose first $\rho<p$.
Then, since $L^{p/\rho,\infty}(X)$ admits an equivalent subadditive norm, we get
\begin{align*}
\eqref{eq:194}
&\lesssim_{p, \rho}
\Big( \sum_{j< 0} 2^{jr} \norm[\big]{ \calN_{2^{j}} }_{L^{p/\rho,\infty}(X)} \Big)^{p/\rho}\\
&=
\Big( \sum_{j< 0} 2^{j(r-\rho)} \norm[\big]{ 2^{j} \calN_{2^{j}}^{1/\rho} }_{L^{p,\infty}(X)}^{\rho} \Big)^{p/\rho}\\
&\leq
A^{p} \Big( \sum_{j\leq 0} 2^{j(r-\rho)} \Big)^{p/\rho}\\
&=
A^{p} ( 1-2^{-(r-\rho)} )^{-p/\rho}.
\end{align*}
Suppose now $p<\rho$.
Then by \eqref{lem:lpinf-quasi-triangle} we have
\begin{align*}
\eqref{eq:194}
&\lesssim_{p,\rho}
\sum_{j\leq 0} \norm[\big]{2^{jr} \calN_{2^{j}} }_{L^{p/\rho,\infty}(X)}^{p/\rho}\\
&=
\sum_{j\leq 0} 2^{j(r-\rho)p/\rho} \norm[\big]{ 2^{j} \calN_{2^{j}}^{1/\rho} }_{L^{p,\infty}(X)}^{p}\\
&\leq
A^{p} \sum_{j\leq 0} 2^{j(r-\rho)p/\rho}\\
&=
A^{p} (1-2^{-(r-\rho)p/\rho})^{-1}.
\end{align*}
In the case $p=\rho$ we have
\[
\norm{\calN_{2^{j}}}_{L^{1,\infty}(X)}
=
2^{-j\rho} \norm{2^{j} \calN_{2^{j}}^{1/\rho}}_{L^{p,\infty}(X)}^{p}
\leq
2^{-j\rho} A^{p},
\]
and using \eqref{lem:l1inf-quasi-triangle} with $a_{j} = A^{p} 2^{j(r-\rho)}$ we obtain
\begin{align*}
\eqref{eq:194}
&\leq
  2 \sum_{j\leq 0} a_{j} \Big( \log \Big(a_{j}^{-1}\sum_{j'\leq 0} a_{j'}\Big) + 2 \Big)\\
&=
2 A^{p} \sum_{j\leq 0} 2^{j(r-\rho)} \Big( \log \Big(2^{-j(r-\rho)}\sum_{j'\leq 0} 2^{j'(r-\rho)}\Big) + 2 \Big)\\
&\lesssim
A^{p} \sum_{j\leq 0} 2^{j(r-\rho)} \bigl(-j(r-\rho) - \log (r-\rho) + 2 \bigr)\\
&\lesssim
A^{p} (r-\rho)^{-1} ( 1 - \log (r-\rho) ).
\end{align*}
Alternatively, still in the case $p=\rho$, we can estimate
\begin{align*}
\eqref{eq:194}
&
\leq
\norm[\Big]{ \sum_{j<0} 2^{jr} \calN_{2^{j}} }_{L^{1}(X)}\\
&\leq
\sum_{j<0} 2^{j(r-\rho)} \norm[\Big]{ 2^{j} \calN_{2^{j}}^{1/\rho} }_{L^{\rho}(X)}^{\rho}\\
&\lesssim
(r-\rho)^{-1} \sup_{\lambda > 0} \norm[\Big]{ \lambda \calN_{\lambda}^{1/\rho} }_{L^{\rho}(X)}^{\rho}.
\qedhere
\end{align*}
\end{proof}

\section{Endpoint L\'epingle inequality for martingales}
\label{sec:martingale}
Let $(X, \calB, \frakm)$ be a $\sigma$-finite measure space and $\bbI$ a totally ordered set.
A sequence of sub-$\sigma$-algebras $(\calG_t)_{t\in \bbI}$ of $\calB$ is called a \emph{filtration} if it is increasing and the measure $\frakm$ is $\sigma$-finite on each $\calG_t$.
Let $B$ be a Banach space.
A $B$-valued \emph{martingale} adapted to a filtration $(\calG_t)_{t\in \bbI}$ is a family of functions $\frakf=(\frakf_t)_{t\in \bbI} \subset L_{\mathrm{loc}}^1(X, \calB,\frakm;B)$ such that $\frakf_{t'}=\EE{\frakf_t \given \calG_{t'}}$ for every $t',t\in\bbI$ with $t'\leq t$, where $\EE{\cdot \given \calG}$ denotes the conditional expectation with respect to a sub-$\sigma$-algebra $\calG\subseteq\calB$.

We recall from \cite[Theorem 10.59]{MR3617459} that a Banach space $B$ has \emph{martingale cotype $\rho\in [2, \infty)$} if and only if for any $B$-valued martingale $\frakf= (\frakf_{n})_{n\in \N}$ the ``$\rho$-square function''
\[
S_{\rho}\frakf := \Big( \sum_{n>0} \norm{\frakf_{n}-\frakf_{n-1}}_B^{\rho} \Big)^{1/\rho}
\]
satisfies the estimates
\begin{equation}
\label{eq:martingale-square-max}
\norm{S_{\rho}\frakf}_{L^p(X)} \lesssim_{p} \norm{\frakf_{\star}}_{L^p(X)},
\quad p\in [1,\infty),
\end{equation}
where $\frakf_{\star}(x) := \sup_{n\in\N}\norm{\frakf_n(x)}_B$ is the martingale maximal function and the implicit constant does not depend on $\frakf$.
By Doob's inequality, see e.g.\ \cite[Corollary 1.28]{MR3617459}, we know that
\begin{equation}
\label{eq:doob-max}
\norm{\frakf_{\star}}_{L^p(X)}
\leq p' \sup_{n\in \N}\norm{\frakf_n}_{L^p(X;B)},
\quad p\in(1,\infty].
\end{equation}
A Banach space has martingale cotype $\rho$ for some $\rho \in [2,\infty)$ if and only if it is uniformly convex, see \cite[Chapter 10]{MR3617459}.

Now we are in a position to formulate the quantitative version of the endpoint L\'epingle inequality for martingales.
\begin{theorem}
\label{thm:endpoint-lepingle}
Given $p\in(1,\infty)$ and $\rho\in[2, \infty)$, let $B$ be a Banach space and $(X, \calB, \frakm)$ a $\sigma$-finite measure space.
Suppose that the inequality
\begin{equation}
\label{eq:martingale-square-bound}
\norm{S_{\rho} \frakf}_{L^{p}(X)} \leq A_{p,\rho,B} \sup_{n\in\N} \norm{\frakf_{n}}_{L^{p}(X;B)}
\end{equation}
holds for arbitrary martingales $(\frakf_{n})_{n\in\N}$ with values in $B$.
Then for every finite totally ordered set $\bbI$ and every martingale $\frakf=(\frakf_{t})_{t\in \bbI} : X \to B$ indexed by $\bbI$ with values in $B$ we have
\begin{equation}
\label{eq:endpoint-lepingle}
J^{p}_{\rho}(\frakf) \simeq
[L^{\infty}(X;V_{\bbI\to B}^{\infty}),L^{p/\rho}(X;V_{\bbI \to B}^{1})]_{1/\rho,\infty}(\frakf)
\leq 3 A_{p,\rho,B}
\sup_{t\in \bbI}\norm{\frakf_{t}}_{L^p(X;B)}.
\end{equation}
\end{theorem}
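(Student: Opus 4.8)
The plan is to first reduce \eqref{eq:endpoint-lepingle} to its interpolation-space half and then prove that half by a martingale stopping time decomposition. The equivalence $J^p_\rho(\frakf) \simeq [L^\infty(X;V^\infty_{\bbI\to B}),L^{p/\rho}(X;V^1_{\bbI\to B})]_{1/\rho,\infty}(\frakf)$ is simply Lemma~\ref{lem:pisier-xu-space} applied with $\theta=1/\rho$ and $q=p$: then $\theta\rho=1$, $\theta p=\theta q=p/\rho$, and $L^{p/\rho,p/\rho}(X)=L^{p/\rho}(X)$, so the interpolation space appearing in \eqref{eq:197} is exactly the one in \eqref{eq:endpoint-lepingle}. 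Writing $M:=\sup_{t\in\bbI}\norm{\frakf_t}_{L^p(X;B)}$ (and assuming $M<\infty$, else there is nothing to prove), by \eqref{eq:KJ-inf-norms} it then remains to show
\[
K\bigl(s,\frakf;L^\infty(X;V^\infty_{\bbI\to B}),L^{p/\rho}(X;V^1_{\bbI\to B})\bigr) \le 3A_{p,\rho,B}\,M\,s^{1/\rho}\qquad\text{for all }s>0.
\]

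Fix $s>0$ and set $\lambda:=A_{p,\rho,B}\,M\,s^{1/\rho}$; this choice will balance the two terms of the $K$-functional at the end. I would then reuse the $\lambda$-jump stopping time from the proof of Lemma~\ref{lem:pisier-xu-space}: measurable stopping times $t_0\le t_1\le\dotsb\colon X\to\bbI\cup\{+\infty\}$ with $t_0=\min\bbI$ and $t_{k+1}(x)$ the first time after $t_k(x)$ at which $\norm{\frakf_t(x)-\frakf_{t_k(x)}(x)}_B\ge\lambda$, together with the splitting $\frakf=\frakf^0+\frakf^1$, $\frakf^1(x,t):=\frakf_{t_{k(x,t)}(x)}(x)$, $\frakf^0:=\frakf-\frakf^1$, where $k(x,t):=\max\{k : t_k(x)\le t\}$. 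That proof already supplies $\norm{V^\infty(\frakf^0(\cdot,t):t\in\bbI)}_{L^\infty(X)}\le2\lambda$, which controls the first term of $K(s,\frakf)$.

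For the second term, the key point is that $\frakf^1(x,\cdot)$ is constant on each block $[t_k(x),t_{k+1}(x))\cap\bbI$, so $V^1(\frakf^1(x,t):t\in\bbI)=\sum_k\norm{\frakf_{t_{k+1}}(x)-\frakf_{t_k}(x)}_B$, the sum running over those $k$ with $t_{k+1}(x)\in\bbI$; every such increment has norm $\ge\lambda$, hence, since $\rho\ge1$, this is at most $\lambda^{1-\rho}\sum_k\norm{\frakf_{t_{k+1}}(x)-\frakf_{t_k}(x)}_B^\rho=\lambda^{1-\rho}\bigl(S_\rho\tilde\frakf\bigr)(x)^\rho$, where $\tilde\frakf=(\frakf_{t_k})_{k\in\N}$ is the sampled process, with the convention $\frakf_{+\infty}:=\frakf_{\max\bbI}$. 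Since $\frakf$ is adapted, each $t_k$ is a genuine stopping time for $(\calG_t)_{t\in\bbI}$, so by optional stopping $\tilde\frakf$ is a $B$-valued martingale with $\tilde\frakf_k=\EE{\frakf_{\max\bbI}\given\calG_{t_k}}$, whence $\sup_k\norm{\tilde\frakf_k}_{L^p(X;B)}\le M$. Applying the hypothesis \eqref{eq:martingale-square-bound} to $\tilde\frakf$ gives $\norm{\frakf^1}_{L^{p/\rho}(X;V^1_{\bbI\to B})}\le\lambda^{1-\rho}\norm{S_\rho\tilde\frakf}_{L^p(X)}^\rho\le\lambda^{1-\rho}A_{p,\rho,B}^\rho M^\rho$. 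Plugging both estimates into $K(s,\frakf)\le\norm{\frakf^0}_{L^\infty(X;V^\infty_{\bbI\to B})}+s\norm{\frakf^1}_{L^{p/\rho}(X;V^1_{\bbI\to B})}$ and inserting the chosen $\lambda$ yields $K(s,\frakf)\le2\lambda+s\lambda^{1-\rho}A_{p,\rho,B}^\rho M^\rho=3A_{p,\rho,B}\,M\,s^{1/\rho}$, as required.

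The one step that needs genuine care — as opposed to invoking Lemma~\ref{lem:pisier-xu-space} and bookkeeping — is the optional stopping: verifying that the $t_k$ really are stopping times for the filtration (this is exactly where adaptedness of $\frakf$ enters, a structure absent in Lemma~\ref{lem:pisier-xu-space}) and that the stopped-and-sampled process is again a martingale with $L^p$-norms bounded by $M$. With $\bbI$ finite and the convention $\frakf_{+\infty}:=\frakf_{\max\bbI}$ these are standard, but they are the only nontrivial inputs beyond the already-established lemma; note that the constant $3$ in \eqref{eq:endpoint-lepingle} emerges precisely from the optimal balancing of the two $K$-functional terms.
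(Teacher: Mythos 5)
Your proposal is correct and follows essentially the same route as the paper: the same $\lambda$-jump stopping time splitting $\frakf=\frakf^{0}+\frakf^{1}$, the bound $V^{1}(\frakf^{1})\le\lambda^{1-\rho}(S_{\rho}\tilde\frakf)^{\rho}$ for the stopped martingale, and the hypothesis \eqref{eq:martingale-square-bound} applied to $\tilde\frakf$, yielding $K(s,\frakf)\le 3A_{p,\rho,B}Ms^{1/\rho}$ exactly as in the paper (which phrases the balancing as a reparametrization $s=\lambda^{\rho}A_{p,\rho,B}^{-\rho}$ rather than choosing $\lambda$ in terms of $s$). Your convention $\frakf_{+\infty}:=\frakf_{\max\bbI}$ differs harmlessly from the paper's choice of freezing $\tilde\frakf$ at the last finite stopping time, and your explicit optional-stopping verification fills in what the paper only remarks parenthetically.
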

Theorem~\ref{thm:endpoint-lepingle:simple} follows from Theorem~\ref{thm:endpoint-lepingle} since for a Banach space $B$ with martingale cotype $\rho$ the estimate \eqref{eq:martingale-square-bound} holds with some finite constant $A_{p,\rho,B}<\infty$ in view of \eqref{eq:martingale-square-max} and \eqref{eq:doob-max}.

Our proof of Theorem~\ref{thm:endpoint-lepingle} is even simpler than the one presented in \cite[Lemma 2.2]{MR933985} for $p=2$.
At the endpoint $p=1$, assuming the weak type analogue of \eqref{eq:martingale-square-bound}, it yields the weak type estimate
\begin{equation}
\label{eq:L1-lepingle}
J^{1,\infty}_{\rho}(f) \simeq
[L^{\infty}(X;V^{\infty}_{\bbI \to B}),L^{1/\rho,\infty}(X;V^{1}_{\bbI \to B})]_{1/\rho,\infty}(\frakf)
\lesssim
\sup_{t\in \bbI}\norm{\frakf_{t}}_{L^1(X;B)}.
\end{equation}

\begin{proof}
By homogeneity we may assume $\sup_{t\in \bbI}\norm{\frakf_{t}}_{L^p(X;B)}=1$.
Let $\lambda>0$ and construct stopping times $t_{0},t_{1},\dotsc : X \to \bbI \cup \Set{+\infty}$ associated to $\lambda$-jumps as in \eqref{eq:lambda-jump-stopping-time} (note that they are indeed stopping times in the stochastic sense).
Split $\frakf_{t}=\frakf_{t}^{0}+\frakf_{t}^{1}$ with
\[
\frakf_{t}^{1}(x) := \sum_{k\geq 0} \one_{[t_{k}(x),  t_{k+1}(x))}(t) \frakf_{t_{k}(x)}(x).
\]
Then by construction $\norm{\frakf_{t}^{0}(x)}_{B}\leq\lambda$ for all $t\in\bbI$ and $x\in X$.
On the other hand,
\begin{align*}
V^{1}(\frakf_{t}^{1}(x) : t\in\bbI)
&\le
\sum_{\substack{k \in \N :\\ k\geq 1 \text{ and } t_{k}(x) < +\infty}} \underbrace{\norm{\frakf_{t_{k}(x)}(x)-\frakf_{t_{k-1}(x)}(x)}_{B}}_{\geq \lambda}\\
&\leq
\lambda^{1-\rho} \sum_{\substack{k \in \N :\\ k\geq 1 \text{ and } t_{k}(x) < +\infty}} \norm{\frakf_{t_{k}(x)}(x)-\frakf_{t_{k-1}(x)}(x)}_{B}^{\rho}\\
&=
\lambda^{1-\rho} (S_{\rho}\tilde\frakf)^{\rho},
\end{align*}
where
\[
\tilde\frakf_{k}(x) :=
\begin{cases}
\frakf_{t_{k}(x)}(x) & \text{if } t_{k}(x) < + \infty,\\
\frakf_{\max\Set{t_{k'}(x) \given k'\in\N, t_{k'}(x)<\infty}}(x) & \text{if } t_{k}(x) = + \infty
\end{cases}
\]
is the stopped martingale.
Thus
\begin{align*}
\norm{ V^{1}(\frakf_{t}^{1} : t\in\bbI) }_{L^{p/\rho}(X)}
&\leq
\lambda^{1-\rho} \norm{ (S_{\rho}\tilde\frakf)^{\rho} }_{L^{p/\rho}(X)}\\
&=
\lambda^{1-\rho} \norm{ S_{\rho}\tilde\frakf }_{L^{p}(X)}^{\rho}\\
&\leq
\lambda^{1-\rho} A_{p,\rho,B}^{\rho} \sup_{k\in\N} \norm{\tilde\frakf_{k}}_{L^{p}(X;B)}\\
&\leq
\lambda^{1-\rho} A_{p,\rho,B}^{\rho},
\end{align*}
so that
\begin{multline*}
K(\lambda^{\rho} A_{p,\rho,B}^{-\rho}, \frakf; L^{\infty}(X;V_{\bbI\to B}^{\infty}),L^{p/\rho}(X;V_{\bbI \to B}^{1}))\\
\leq
\norm{\frakf^{0}}_{L^{\infty}(X;V_{\bbI\to B}^{\infty})} + \lambda^{\rho} A_{p,\rho,B}^{-\rho} \norm{\frakf^{1}}_{L^{p/\rho}(X;V_{\bbI \to B}^{1})}
\leq
3 \lambda.
\end{multline*}
Since $\lambda>0$ was arbitrary we obtain
\[
K(\lambda, \frakf; L^{\infty}(X;V_{\bbI\to B}^{\infty}),L^{p/\rho}(X;V_{\bbI \to B}^{1}))
\leq
3 A_{p,\rho,B} \lambda^{1/\rho}
\]
for all $\lambda \in (0,\infty)$.
The conclusion \eqref{eq:endpoint-lepingle} follows by definition of real interpolation spaces.
\end{proof}

\subsection{Doubly stochastic operators}
In this section we prove Theorem~\ref{thm:markov-jump}.
By the monotone convergence theorem it suffices to consider $n$ in a finite subset $\bbI \subset \N$.
By Rota's dilation theorem (proved in \cite{MR0133847} on probability spaces and \cite{MR0190757} on $\sigma$-finite measure spaces) there exists a measure space $(\Omega,\tilde\calB,\tilde\frakm)$, a sub-$\sigma$-algebra $\calG \subseteq \tilde\calB$, a measure space isomorphism
\[
\iota : (\Omega, \calG, \tilde\frakm) \to (X, \calB, \frakm),
\]
and a decreasing sequence of sub-$\sigma$-algebras
\[
\tilde{\calB} \supseteq \calG_{0} \supseteq \calG_{1} \supseteq \dotsb
\]
such that for every $f\in L^{1}(X) + L^{\infty}(X)$ and every $n\in\N$ we have
\begin{equation}
\label{eq:rota-representation}
(Q^{*})^{n}Q^{n}f
=
S \big(\EE{f \circ \iota \given \calG_{n}}\big),
\end{equation}
where the operator $S : L^{p}(\Omega) \to L^{p}(X)$ is characterized by $Sf \circ \iota = \EE{f \given \calG}$.
The operators on both sides of \eqref{eq:rota-representation} are positive contractions on all spaces $L^{p}(X)$ for $1\leq p \leq \infty$, and therefore their algebraic tensor products with $\id_{B}$ extend uniquely to contractions on the Bochner spaces $L^{p}(X;B)$ for all $1 \leq p \leq \infty$ by \cite[Proposition 1.6]{MR3617459}, that also have to coincide.

Let $B$ be a Banach space with martingale cotype $2 \leq \rho < \infty$ and $1 < p < \infty$.
By Theorem~\ref{thm:endpoint-lepingle} for every function $f$ in the Bochner space $L^{p}(X;B)$ we have the inequality
\[
J^{p}_{\rho}((\EE{f \circ \iota \given \calG_{n}})_{n\in \bbI} : \Omega  \to B)
\lesssim_{p,\rho,B}
\norm{f}_{L^{p}(X;B)}.
\]
By Lemma~\ref{lem:pisier-xu-space} with $\theta = \max(1/p, 1/\rho)$ the left-hand side of this estimate is equivalent to
\[
[L^{\infty}(\Omega;V^{\infty}_{\bbI\to B}), L^{\theta p}(\Omega;V^{\theta\rho}_{\bbI \to B})]_{\theta,\infty}((\EE{f \circ \iota \given \calG_{n}})_{n \in \bbI}).
\]
The operator $S : L^{p}(\Omega) \to L^{p}(X)$ is a contraction for every $1 \leq p \leq \infty$.
Since the operator $S$ is positive, by \cite[Proposition 1.6]{MR3617459} for every Banach space $\tilde B$ the algebraic tensor product operator $S \otimes \id_{\tilde B}$ extends to a contraction
\[
L^{p}(\Omega;\tilde B) \to L^{p}(X;\tilde B)
\]
that will be again denoted by $S$.
Applying this with $\tilde B = V^{\infty}_{\bbI\to B}$ and $\tilde B = V^{\theta\rho}_{\bbI\to B}$ and using the Marcinkiewicz interpolation theorem \cite[Theorem 3.1.2]{MR0482275} we see that the operator $S$ extends to a contraction
\[
[L^{\infty}(\Omega;V^{\infty}_{\bbI\to B}), L^{\theta p}(\Omega;V^{\theta\rho}_{\bbI \to B})]_{\theta,\infty}
\to
[L^{\infty}(X;V^{\infty}_{\bbI\to B}), L^{\theta p}(X;V^{\theta\rho}_{\bbI \to B})]_{\theta,\infty}.
\]
By Lemma~\ref{lem:pisier-xu-space} the norm on the interpolation space on the right-hand side is equivalent to $J^{p}_{\rho}(f : X \times \bbI \to B)$, and the conclusion follows from \eqref{eq:rota-representation}.

\section{Sampling for  Fourier multipliers}
\subsection{Interpolation between Bochner spaces}
In Section~\ref{sec:transference-interpolation} we will have to assemble estimates in interpolation spaces on congruence classes modulo $q$ into estimates on all of $\Z^{d}$.
The following result will allow us to do this in an abstract setting.
\begin{lemma}
\label{lem:weak-lp-cong-class-split}
For every $p\in(1,\infty)$ and $\theta\in(0, 1)$ such that $1\leq \theta p$ there exists a constant $0<C_{p, \theta} < \infty$ such that the following holds.
Let $(X, \calB, \frakm)$ be a $\sigma$-finite measure space, $(A_{0},A_{1})$ a compatible couple of Banach spaces, $f:X \to A_{0}+A_{1}$ a measurable function, and $X=\bigcup_{j\in I} X_{j}$ a countable measurable partition.
Then
\begin{align*}
\MoveEqLeft{}
[L^{\infty}(X;A_{0}),L^{\theta p}(X;A_{1})]_{\theta,\infty}(f)\\
&\leq C_{p,\theta}
\bigg( \sum_{j\in I} \Big( [L^{\infty}(X_{j};A_{0}),L^{\theta p}(X_{j};A_{1}))]_{\theta,\infty}(f) \Big)^{p} \bigg)^{1/p}. 
\end{align*}
\end{lemma}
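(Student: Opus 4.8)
The plan is to reduce everything to the defining formula for the $K$-functional and exploit the fact that the ambient spaces are Lorentz/Lebesgue spaces built on the \emph{same} partition $X = \bigcup_j X_j$, so that a splitting $f = f^0 + f^1$ can be chosen piece by piece. Concretely, write $f_j := f \one_{X_j}$, so that $f = \sum_{j \in I} f_j$ and the norms $\norm{\cdot}_{L^\infty(X;A_0)}$, $\norm{\cdot}_{L^{\theta p}(X;A_1)}$ decompose across $j$ (an $\ell^\infty$ and an $\ell^{\theta p}$ combination respectively). For a fixed $t > 0$, choose for each $j$ a near-optimal splitting $f_j = f_j^0 + f_j^1$ realizing $K(t, f_j; L^\infty(X_j;A_0), L^{\theta p}(X_j;A_1))$ up to a factor $2$, and set $f^0 := \sum_j f_j^0$, $f^1 := \sum_j f_j^1$ (the sums have disjoint supports so there is no convergence issue for finite $I$, and the general case follows by monotone convergence since $\bbI$, hence everything, is being approximated by finite pieces — or one simply notes all quantities are suprema of their finite-subcollection analogues).

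First I would record the elementary ``subadditivity'' of the $K$-functional across the partition:
\[
K(t, f; L^\infty(X;A_0), L^{\theta p}(X;A_1))
\le
\Bigl( \sup_{j} K(t, f_j; \dots)^{?} \Bigr) \ \text{mixed with} \ \Bigl( \sum_j K(t,f_j;\dots)^{\theta p}\Bigr)^{1/\theta p},
\]
but this naive bound mixes an $\ell^\infty$-type and an $\ell^{\theta p}$-type contribution and is not directly what we want. The cleaner route is to pass to the equivalent formulation via real interpolation of the pair $(L^\infty, L^{\theta p})$: by the standard identification $[L^\infty(Y), L^{\theta p}(Y)]_{\theta,\infty} \simeq L^{p,\infty}(Y)$ for scalar-valued functions (Holmstedt / the computation of the $K$-functional for Lebesgue couples), and its vector-valued analogue, the left-hand side $[L^\infty(X;A_0), L^{\theta p}(X;A_1)]_{\theta,\infty}(f)$ is comparable to a weak-$L^p$–type quantity of the ``interpolation norm density'' $x \mapsto [A_0,A_1]_{\theta,\infty}(f(x))$-style object — more precisely one uses Holmstedt's formula to write $K(t,f)$ as an $L^\infty$ part plus an integral of the decreasing rearrangement of $[A_0,A_1]$-local $K$-functionals, and then the estimate becomes a statement about $L^{p,\infty}$ quasi-norms over the partition. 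The key inequality is then the $p$-convexity of $L^{p,\infty}$ for $p > 1$: $\norm{\sum_j g_j}_{L^{p,\infty}(X)}^{p} \lesssim_p \sum_j \norm{g_j}_{L^{p,\infty}(X_j)}^{p}$ when the $g_j$ have disjoint supports, which is exactly the disjoint-support case of the circle of estimates around \eqref{lem:lpinf-quasi-triangle} and \eqref{lem:l1inf-quasi-triangle} (and in the disjoint-support case it is in fact elementary: the distribution function of $\sum_j g_j = \sum_j g_j \one_{X_j}$ is $\sum_j \frakm(\{g_j > s\})$, so one bounds directly).

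So the steps, in order, are: (1) reduce to finite $I$; (2) invoke Holmstedt's formula / the explicit $K$-functional for the couple $(L^\infty(Y;A_0), L^{\theta p}(Y;A_1))$ to identify $[L^\infty(Y;A_0),L^{\theta p}(Y;A_1)]_{\theta,\infty}(f)$ with a weak-type $L^p$ expression in the pointwise quantities, valid uniformly over $Y \in \{X\} \cup \{X_j\}$, with constants depending only on $p,\theta$ (here the hypothesis $\theta p \ge 1$ is what makes $L^{\theta p}$ a genuine Banach space and keeps Holmstedt's constants under control); (3) apply the disjoint-support $p$-convexity of $L^{p,\infty}$ to sum the local contributions, producing the $\bigl(\sum_j (\cdots)^p\bigr)^{1/p}$ on the right. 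I expect step (2) — getting a clean, partition-compatible description of the $\theta,\infty$ interpolation norm between Bochner spaces via Holmstedt — to be the main obstacle: one must be careful that the $A_0$-valued ($L^\infty$) component really does contribute only through $\norm{\cdot}_{L^\infty}$ (an $\ell^\infty$ combination over $j$, harmless against the $\ell^p$ on the right since $p < \infty$ means $\ell^\infty \hookrightarrow \ell^p$ fails, so this needs the weak-$L^p$ machinery to absorb it properly), and that the vector-valued Holmstedt estimate is available with the stated uniformity. The remaining arithmetic — combining a bounded number of error factors and verifying that the constant depends only on $p$ and $\theta$ — is routine.
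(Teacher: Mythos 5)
You have the right general shape---split $f$ over the partition and use an explicit description of the $K$-functional for the couple of Bochner spaces---but your step (2), which you yourself flag as the main obstacle, is exactly where the argument breaks down as written. The identification of $[L^{\infty}(Y;A_0),L^{\theta p}(Y;A_1)]_{\theta,\infty}$ with a weak-$L^p$ quantity of a pointwise ``interpolation density'' $x\mapsto[A_0,A_1]_{\theta,\infty}(f(x))$ is available only when $A_0=A_1$; for genuinely different $A_0,A_1$ the real interpolation space of Bochner spaces is in general not of this form (this is Cwikel's counterexample \cite{MR0358326}, cited in the paper precisely at this point), and a naive vector-valued Holmstedt formula in terms of the rearrangement of local $K$-functionals at a fixed parameter does not hold: the splitting parameter must be allowed to vary with $x$. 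The tool the paper uses instead is Theorem~\ref{thm:K-vector} (Pisier's formula), which expresses $K(t,f;L^{\theta p}(X;A_1),L^{\infty}(X;A_0))^{\theta p}$ as a supremum over weight functions $\psi$ with $\norm{\psi}_{L^{\theta p}(X)}=t$ of $\int_X K(\psi(x),f(x);A_1,A_0)^{\theta p}\,\dif\frakm(x)$. The varying weight $\psi$ is what lets one use a different effective splitting parameter on each $X_j$; your fixed-$t$, piece-by-piece near-optimal splitting produces exactly the $\ell^\infty$/$\ell^{\theta p}$ mixture you noticed, and it cannot be upgraded to the desired $\ell^p$ combination without this extra degree of freedom.

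Once Pisier's formula is in hand, the combination over the partition is not done by $p$-convexity of $L^{p,\infty}$ (your step (3)) but by an elementary H\"older argument: writing $a_j=\int_{X_j}\psi^{\theta p}$ and $b_j=\int_{X_j}K(\psi(\cdot),f(\cdot);A_1,A_0)^{\theta p}$, one checks
\[
\Big(\sum_j a_j\Big)^{1-1/\theta}\Big(\sum_j b_j\Big)^{1/\theta}\le\sum_j a_j^{1-1/\theta}b_j^{1/\theta}
\]
by H\"older with exponent $1/\theta$ (using $\theta\in(0,1)$), and each summand is controlled by the $p$-th power of the interpolation norm on $X_j$ by applying Pisier's formula again on each piece. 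Your disjoint-support $p$-convexity lemma would only enter if the left-hand side were literally a weak-$L^p$ norm of a pointwise quantity, which is the identification that fails. So: right skeleton, but the key formula (Theorem~\ref{thm:K-vector}) and the H\"older step that exploits its variable weight are missing, and the substitutes you propose do not work in the stated generality.
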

In the case $A_{0}=A_{1}=A$ Lemma~\ref{lem:weak-lp-cong-class-split} follows readily from the fact that
\[
[L^{\infty}(A_{0}),L^{\theta p}(A_{1})]_{\theta,\infty} = L^{p,\infty}(A)
\]
and the description of the Lorentz space $L^{p,\infty}(A)$ in terms of superlevel sets.
For general Banach spaces we will use an explicit description of the $K$-functional between Bochner spaces going back to \cite[Theorem 5]{MR1213125} in the following form.
\begin{theorem}[{\cite[Remark 8.61]{MR3617459}}]
\label{thm:K-vector}
For every $\rho\in[1, \infty)$ there exists a constant $0<C_{\rho} < \infty$ such that the following holds.
Let $(X, \calB, \frakm)$ be a $\sigma$-finite measure space and $(A_{0},A_{1})$ a compatible couple of Banach spaces.
Then for every function $f\in L^{\rho}(X;A_{0})+L^{\infty}(X;A_{1})$ and every $t>0$ we have
\begin{align}
\label{eq:K-vector}
\begin{split}
\MoveEqLeft{}
C_{\rho}^{-1}K(t,f;L^{\rho}(X; A_{0}),L^{\infty}(X; A_{1}))^{\rho}\\
&\le
\sup_{\psi> 0 :\ \norm{\psi}_{L^{\rho}(X)}= t} \int_X
K(\psi(x),f(x);A_{0},A_{1})^{\rho} \dif\frakm(x)\\
&\le C_{\rho}K(t,f;L^{\rho}(X;A_{0}),L^{\infty}(X;A_{1}))^{\rho},
\end{split}
\end{align}
where the supremum is taken over all strictly positive measurable and $\rho$-integrable functions $\psi : X \to (0,\infty)$.
\end{theorem}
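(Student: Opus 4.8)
The plan is to prove the two halves of \eqref{eq:K-vector} separately. Abbreviate $K_{x}(s):=K(s,f(x);A_{0},A_{1})$ and put, for $s>0$,
\[
M(s):=\sup_{\psi>0:\ \norm{\psi}_{L^{\rho}(X)}=s}\int_{X}K_{x}(\psi(x))^{\rho}\dif\frakm(x),
\]
so that $M(t)$ is the middle quantity in \eqref{eq:K-vector}. The right-hand inequality holds already with constant $1$: for any splitting $f=f_{0}+f_{1}$ with $f_{0}\in L^{\rho}(X;A_{0})$, $f_{1}\in L^{\infty}(X;A_{1})$ and any admissible $\psi$ one has $K_{x}(\psi(x))\le\norm{f_{0}(x)}_{A_{0}}+\psi(x)\norm{f_{1}(x)}_{A_{1}}$ pointwise, so Minkowski's inequality in $L^{\rho}(X)$ together with $\norm{f_{1}(\cdot)}_{A_{1}}\le\norm{f_{1}}_{L^{\infty}(X;A_{1})}$ gives
\[
\Bigl(\int_{X}K_{x}(\psi(x))^{\rho}\dif\frakm(x)\Bigr)^{1/\rho}\le\norm{f_{0}}_{L^{\rho}(X;A_{0})}+t\,\norm{f_{1}}_{L^{\infty}(X;A_{1})};
\]
taking the infimum over splittings and the supremum over $\psi$ yields $M(t)\le K(t,f;L^{\rho}(X;A_{0}),L^{\infty}(X;A_{1}))^{\rho}$, and in particular $M(t)<\infty$. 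From now on $t$ is fixed, we may assume $0<M(t)<\infty$, and we fix a strictly positive $h\in L^{\rho}(X)$ with $\norm{h}_{L^{\rho}(X)}\le t$ (which exists since $\frakm$ is $\sigma$-finite).

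For the left-hand inequality we construct a near-optimal splitting. Since $A_{0},A_{1}$ are genuine Banach spaces, each $K_{x}$ is concave, nonnegative and nondecreasing, hence continuous on $(0,\infty)$, the function $s\mapsto K_{x}(s)/s$ is nonincreasing there, and $K_{x}(cs)\le cK_{x}(s)$ for $c\ge1$; the last bound upgrades to the scaling relation $M(cs)\le c^{\rho}M(s)$ for $c\ge1$. Fix the threshold $\tau:=2M(t)^{1/\rho}/t$ and the ``stopping scale''
\[
\sigma(x):=\inf\Set{s>0\given K_{x}(s)\le\tau s}\in[0,\infty],
\]
so that $K_{x}(s)>\tau s$ for $s<\sigma(x)$, $K_{x}(s)\le\tau s$ for $s>\sigma(x)$, and $K_{x}(\sigma(x))=\tau\sigma(x)$ whenever $0<\sigma(x)<\infty$. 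First, $\sigma(x)<\infty$ for a.e.\ $x$: otherwise the set $E:=\Set{x\given K_{x}(s)\ge\tau s\text{ for all }s>0}$ has positive measure, and for a subset $E_{0}\subseteq E$ with $0<\frakm(E_{0})<\infty$ the competitor $\psi:=t\,\frakm(E_{0})^{-1/\rho}\one_{E_{0}}$ forces $M(t)\ge\tau^{\rho}t^{\rho}=2^{\rho}M(t)$, a contradiction.

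The heart of the matter is the bound $\norm{\sigma}_{L^{\rho}(X)}\le t$. The direct approach is circular: from $K_{x}(\sigma(x))=\tau\sigma(x)$ one would get $\norm{\sigma}_{L^{\rho}(X)}^{\rho}=\tau^{-\rho}\int_{X}K_{x}(\sigma(x))^{\rho}\dif\frakm\le\tau^{-\rho}M(\norm{\sigma}_{L^{\rho}})$, and if one already knew $\norm{\sigma}_{L^{\rho}}\ge t$ the scaling of $M$ would give $\norm{\sigma}_{L^{\rho}}^{\rho}\le 2^{-\rho}\norm{\sigma}_{L^{\rho}}^{\rho}$, hence $\norm{\sigma}_{L^{\rho}}=0$ — but that presupposes the conclusion. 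I would break the loop by a truncation argument: exhaust $X=\bigcup_{n}X_{n}$ with $\frakm(X_{n})<\infty$ and $X_{n}\uparrow X$, let $\sigma_{n}:=\min(\sigma,n)\one_{X_{n}}$ (which increases to $\sigma$), and make it admissible by a harmless perturbation $\tilde\sigma_{n}:=\max(\sigma_{n},\delta_{n}h)$ with $\delta_{n}\downarrow0$. Since $\sigma_{n}\le\sigma$ everywhere, $K_{x}(\sigma_{n}(x))\ge\tau\sigma_{n}(x)$, whence $\norm{\sigma_{n}}_{L^{\rho}(X)}^{\rho}\le\tau^{-\rho}\int_{X}K_{x}(\tilde\sigma_{n}(x))^{\rho}\dif\frakm\le\tau^{-\rho}M(\norm{\tilde\sigma_{n}}_{L^{\rho}})$, with $\norm{\tilde\sigma_{n}}_{L^{\rho}}^{\rho}\le\norm{\sigma_{n}}_{L^{\rho}}^{\rho}+\delta_{n}^{\rho}t^{\rho}$; now the scaling of $M$ is invoked only in the case $\norm{\tilde\sigma_{n}}_{L^{\rho}}\ge t$, where it forces $\norm{\sigma_{n}}_{L^{\rho}}^{\rho}\le 2^{-\rho}\bigl(\norm{\sigma_{n}}_{L^{\rho}}^{\rho}+\delta_{n}^{\rho}t^{\rho}\bigr)$, i.e.\ $\norm{\sigma_{n}}_{L^{\rho}}\to0$, while in the complementary case $\norm{\sigma_{n}}_{L^{\rho}}<t$ outright. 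Either way $\limsup_{n}\norm{\sigma_{n}}_{L^{\rho}}\le t$, and monotone convergence gives $\norm{\sigma}_{L^{\rho}(X)}\le t$.

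To conclude, set $\psi_{\ast}:=\max(\sigma,h)$; then $\psi_{\ast}>0$ everywhere, $\norm{\psi_{\ast}}_{L^{\rho}(X)}\le 2t$, and $\psi_{\ast}(x)\ge\sigma(x)$ yields $K_{x}(\psi_{\ast}(x))\le\tau\psi_{\ast}(x)$ for every $x$. A standard measurable selection (admissible since $f$ is strongly measurable, hence essentially separably valued) provides $f=f^{0}+f^{1}$ with $\norm{f^{0}(x)}_{A_{0}}+\psi_{\ast}(x)\norm{f^{1}(x)}_{A_{1}}\le 2K_{x}(\psi_{\ast}(x))$ a.e. Then $\norm{f^{1}(x)}_{A_{1}}\le 2K_{x}(\psi_{\ast}(x))/\psi_{\ast}(x)\le 2\tau$, so $t^{\rho}\norm{f^{1}}_{L^{\infty}(X;A_{1})}^{\rho}\le 4^{\rho}M(t)$; and $\norm{f^{0}(x)}_{A_{0}}\le 2K_{x}(\psi_{\ast}(x))$ gives $\norm{f^{0}}_{L^{\rho}(X;A_{0})}^{\rho}\le 2^{\rho}M(\norm{\psi_{\ast}}_{L^{\rho}})\le 2^{\rho}M(2t)\le 4^{\rho}M(t)$ by monotonicity and scaling of $M$. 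Hence $K(t,f;L^{\rho}(X;A_{0}),L^{\infty}(X;A_{1}))\le\norm{f^{0}}_{L^{\rho}(X;A_{0})}+t\,\norm{f^{1}}_{L^{\infty}(X;A_{1})}\le 8M(t)^{1/\rho}$, which is the left-hand inequality; tracking constants one may take $C_{\rho}=8^{\rho}$. The main obstacle is exactly the self-referential estimate $\norm{\sigma}_{L^{\rho}(X)}\le t$, resolved by the truncation-plus-perturbation device above, which is what makes the scaling of $M$ usable without circular reasoning; the only other nonroutine point is the measurable selection of the pointwise near-minimizers $f^{0}(x),f^{1}(x)$.
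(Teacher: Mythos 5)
Your argument is essentially correct, but it takes a genuinely different route from the paper's: the paper does not prove Theorem~\ref{thm:K-vector} at all — it quotes it from Pisier's book \cite[Remark 8.61]{MR3617459} (the formula goes back to \cite[Theorem 5]{MR1213125}) — and the only argument supplied in the text is the elementary $(1+\epsilon)$-dilation comparison showing that the supremum over strictly positive $\psi$ with $\norm{\psi}_{L^{\rho}(X)}=t$ coincides with Pisier's supremum over nonnegative $\psi$ with $\norm{\psi}_{L^{\rho}(X)}\le t$. You instead give a self-contained proof of the two-sided estimate: the upper bound by Minkowski with constant $1$, and the lower bound via the stopping scale $\sigma(x)=\inf\{s>0 : K(s,f(x);A_{0},A_{1})\le\tau s\}$ with $\tau=2M(t)^{1/\rho}/t$, where the circularity in proving $\norm{\sigma}_{L^{\rho}(X)}\le t$ is broken by truncation plus the $\max(\cdot,\delta_{n}h)$ perturbation and the scaling $M(cs)\le c^{\rho}M(s)$. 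I checked the main steps — monotonicity and scaling of $M$, the inequality $K_{x}(\sigma_{n}(x))\ge\tau\sigma_{n}(x)$, the dichotomy $s_{n}\ge t$ versus $s_{n}<t$, and the final splitting built from $\psi_{*}=\max(\sigma,h)$ — and they work; your route yields an explicit constant, which the citation does not, and it directly handles the ``strictly positive, norm exactly $t$'' formulation, so the paper's reduction lemma ($R(f)\le L(f)$) becomes unnecessary. Remaining housekeeping, none of it fatal: the competitor $t\,\frakm(E_{0})^{-1/\rho}\one_{E_{0}}$ in the proof that $\sigma<\infty$ a.e.\ is not strictly positive and needs the same $\max(\cdot,\delta h)$ fix; measurability of $\sigma$ should be recorded (continuity of $s\mapsto K(s,f(x))$ plus measurability in $x$ for rational $s$); the degenerate case $M(t)=0$ forces $f=0$ a.e.\ because $K(1,\cdot)$ is a norm on $A_{0}+A_{1}$; and the measurable selection of the near-optimal splitting $f=f^{0}+f^{1}$, which you correctly flag, is the one step that genuinely uses strong measurability and should be written out (e.g.\ by approximating $f$ by countably valued functions), exactly as in the cited source.
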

In Pisier's book \cite[Remark 8.61]{MR3617459} this result is formulated with a supremum over non-negative functions $\psi$ such that $\norm{\psi}_{L^{\rho}(X)}\le t$, however it is easily seen that
\begin{align*}
L(f)& :=\sup_{\psi> 0 :\ \norm{\psi}_{L^{\rho}(X)}= t} \int_X
K(\psi(x),f(x);A_{0},A_{1})^{\rho} \dif\frakm(x)\\
&=\sup_{\psi\ge0 :\ \norm{\psi}_{L^{\rho}(X)}\le t} \int_X
K(\psi(x),f(x);A_{0},A_{1})^{\rho} \dif\frakm(x)
=: R(f).
\end{align*}
Indeed, it suffices to show that $R(f)\le L(f)$.
Let $\epsilon>0$ and consider a measurable function $\psi : X \to [0,\infty)$ such that $\norm{\psi}_{L^{\rho}(X)}\le t$ and
\begin{align*}
R(f) < \epsilon+\int_X
K(\psi(x),f(x);A_{0},A_{1})^{\rho} \dif\frakm(x).
\end{align*}
Now we take a positive measurable function $\phi : X \to (0,\infty)$ such that $\norm{\phi}_{L^{\rho}(X)}= t$ and $\phi(x)\ge \psi(x)/(1+\epsilon)$ for every $x\in X$.
By \eqref{eq:206} and monotonicity of the $K$-functional we obtain
\begin{align*}
R(f)
&< \epsilon+\int_X K(\psi(x),f(x);A_{0},A_{1})^{\rho} \dif\frakm(x)\\
&\leq \epsilon+(1+\epsilon)^{\rho}\int_X K(\psi(x)/(1+\epsilon),f(x);A_{0},A_{1})^{\rho} \dif\frakm(x)\\
&\leq \epsilon+(1+\epsilon)^{\rho}\int_X K(\phi(x),f(x);A_{0},A_{1})^{\rho} \dif\frakm(x)\\
&\leq \epsilon+(1+\epsilon)^{\rho} L(f).
\end{align*}
This proves $R(f)\le L(f)$, since $\epsilon$ is arbitrary.
\begin{proof}[Proof of Lemma~\ref{lem:weak-lp-cong-class-split}]
By \eqref{eq:K-vector} with $\rho=\theta p$ we have
\[
[L^{\infty}(X;A_{0}),L^{\theta p}(X;A_{1})]_{\theta,\infty}(f)^{p}
=
[L^{\theta p}(X;A_{1}),L^{\infty}(X;A_{0})]_{1-\theta,\infty}(f)^{p}
\]
\[
=
\big( \sup_{t>0} t^{\theta-1} K(t,f;L^{\theta p}(X;A_{1}),L^{\infty}(X;A_{0})) \big)^{p}
\]
\[
\simeq_{\rho}
\sup_{\psi > 0} \bigg(\int_X\psi(x)^{\theta p}\dif\frakm(x) \bigg)^{1-1/\theta}\bigg( \int_X
{\underbrace{K(\psi(x),f(x);A_{1},A_{0})}_{=:K_{\psi}(x)}}^{\theta
p} \dif\frakm(x)\bigg)^{1/\theta}
\]
\begin{align}
\label{eq:vector-ellp-int-sp}
=\sup_{\psi > 0}\norm{\psi^{\theta p}}_{L^{\theta p}(X)}^{\theta
p(1-1/\theta)}\big(\norm{K_{\psi}}_{L^{\theta p}(X)}^{\theta p}\big)^{1/\theta}.
\end{align}
We may assume that each $X_{j}$ has non-zero measure.
By H\"older's inequality with exponent $1/\theta$ we obtain the estimate
\begin{align*}
\eqref{eq:vector-ellp-int-sp}&=
\sup_{\psi> 0} \bigg( \sum_{j\in I} \norm{\psi^{\theta p}}_{L^{\theta p}(X_j)}^{\theta
p} \bigg)^{1-1/\theta}
\bigg( \sum_{j\in I} \norm{\psi^{\theta p}}_{L^{\theta p}(X_j)}^{\theta
p(1-\theta)} \, \norm{\psi^{\theta p}}_{L^{\theta p}(X_j)}^{\theta
p(\theta-1)} \, \norm{K_{\psi}}_{L^{\theta p}(X_j)}^{\theta p} \bigg)^{1/\theta}\\
&\leq
\sup_{\psi> 0} \sum_{j\in I} \big(\norm{\psi^{\theta p}}_{L^{\theta p}(X_j)}^{\theta
p} \big)^{1-1/\theta}\, \big( \norm{K_{\psi}}_{L^{\theta p}(X_j)}^{\theta p}\big)^{1/\theta}\\
&\simeq_{\rho}
\sum_{j\in I} \Big( [L^{\theta p}(X_{j};A_{0}),L^{\infty}(X_{j};A_{1})]_{\theta,\infty}(f) \Big)^{p},
\end{align*}
where we have used \eqref{eq:vector-ellp-int-sp} on each $X_{j}$ in the last line.
\end{proof}

\subsection{Sampling and continuation of band limited functions}
\label{sec:sampling}
Let $B$ be a finite-dimensional Banach space.
Consider the extension operator $\calE$ (see \cite[formula (2.2)]{MR1888798}) that maps a vector-valued sequence $f : \Z^{d} \to B$ to the vector-valued function on $\R^{d}$ defined by
\[
\calE f(x) := \sum_{n\in\Z^{d}} f(n) \Psi(x-n),
\quad \text{where}\quad
\Psi(x) = \prod_{i=1}^{d} \bigg( \frac{\sin(\pi x_{i})}{\pi x_{i}} \bigg)^{2}.
\]
Note that $\calE f(n)=f(n)$ for all $n\in\Z^d$, since $\Psi(x)=0$ for all $x\in\Z^d\setminus\{0\}$.

Consider also the restriction operator that maps a vector-valued function $F :\R^{d} \to B$ to the vector-valued sequence on $\Z^{d}$ defined by
\[
\calR F(n) := \int_{\R^{d}} F(y) \Phi(n-y) \dif y,
\]
where $\Phi$ is a Schwartz function such that
\[
\widehat\Phi(\xi)=
\begin{cases}
1, & \text{ if } \abs{\xi}_{\infty}\leq 1,\\
0, & \text{ if } \abs{\xi}_{\infty}\ge 2,
\end{cases}
\]
where $\widehat \ $ denotes the Fourier transform on $\R^d$.

It was proved in \cite[Lemma 2.1]{MR1888798} that $\calR\calE=\id$ and there exists an absolute constant $0<C_{d}<\infty$ (independent of the finite-dimensional Banach space $B$) such that for every $p\in [1,\infty]$ the norms of the operators
\begin{equation}
\label{eq:Fourier-ext-restr}
\calE : \ell^{p}(\Z^d;B) \to L^{p}(\R^d;B),
\qquad
\calR : L^{p}(\R^d;B) \to \ell^{p}(\Z^d;B)
\end{equation}
are bounded by $C_d$.
This was used to deduce the following result for periodic Fourier multipliers.
\begin{proposition}[{\cite[Corollary 2.1]{MR1888798}}]
\label{prop:msw-mult}
There exists an absolute constant $0<C<\infty$ such that the following holds.
Let $p \in [1,\infty]$, $q\in\N$ be a positive integer, and let $B_1, B_2$ be finite-dimensional Banach spaces.
Let $m : \R^{d} \to L(B_1, B_2)$ be a bounded operator-valued function supported on $[-1/2,1/2]^{d}/q$ and denote the associated Fourier multiplier operator over $\R^{d}$ by $T$.
Let $m^{q}_{\mathrm{per}}$ be as in \eqref{eq:m_per} and denote the associated Fourier multiplier operator over $\Z^{d}$ by $T^{q}_{\dis}$.
Then
\[
\norm{T^{q}_{\dis}}_{\ell^{p}(\Z^{d};B_1)\to\ell^{p}(\Z^{d};B_2)}\le
C\norm{T}_{L^{p}(\R^{d};B_1)\to L^{p}(\R^{d};B_2)}.
\]
\end{proposition}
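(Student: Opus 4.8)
The plan is to reduce to the case $q=1$ and then to realize the discrete multiplier operator as a composition $\calR\circ\widetilde T\circ\calE$ of the sampling operators with a suitable modification $\widetilde T$ of $T$, after which the desired bound drops out of the estimates \eqref{eq:Fourier-ext-restr}. For the reduction: the symbol $m^{q}_{\mathrm{per}}$ is $\tfrac1q$-periodic, so I would split $\Z^{d}=\bigsqcup_{b\in\{0,\dots,q-1\}^{d}}(q\Z^{d}+b)$ and, writing $f_{b}(a):=f(qa+b)$, check on the Fourier side that $(T^{q}_{\dis}f)(qa+b)=(S_{\dis}f_{b})(a)$, where $S_{\dis}$ is the operator over $\Z^{d}$ associated through \eqref{eq:m_per} with $q=1$ to the rescaled symbol $\xi\mapsto m(\xi/q)$, which is supported in $[-1/2,1/2]^{d}$. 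Since $f\mapsto(f_{b})_{b}$ is an isometry from $\ell^{p}(\Z^{d};B_{i})$ onto the $\ell^{p}$-direct sum of $q^{d}$ copies of itself for every $p\in[1,\infty]$, this yields $\norm{T^{q}_{\dis}}_{\ell^{p}(\Z^{d};B_{1})\to\ell^{p}(\Z^{d};B_{2})}=\norm{S_{\dis}}_{\ell^{p}(\Z^{d};B_{1})\to\ell^{p}(\Z^{d};B_{2})}$, and since the $L^{p}(\R^{d};B_{1})\to L^{p}(\R^{d};B_{2})$ norm of a Fourier multiplier operator is unchanged under a linear rescaling of its symbol, the continuous multiplier with symbol $m(\,\cdot\,/q)$ has the same norm as $T$. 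Hence it suffices to prove the statement for $q=1$, i.e.\ assuming $\supp m\subseteq[-1/2,1/2]^{d}$.

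One might hope that then $T^{1}_{\dis}=\calR T\calE$, but this is not quite true, since $\widehat\Psi(\xi)=\prod_{i}(1-\abs{\xi_{i}})_{+}$ is not identically $1$ on the cube $[-1/2,1/2]^{d}$; one computes instead that $\calR T\calE$ is the discrete multiplier with symbol $m\widehat\Psi$. To repair this I would fix a real-valued $\rho\in C_{c}^{\infty}(\R^{d})$ with $\rho\equiv1$ on $[-1/2,1/2]^{d}$ and $\supp\rho\subseteq[-3/4,3/4]^{d}$; then $\widehat\Psi\geq 4^{-d}$ on $\supp\rho$, so $\rho/\widehat\Psi\in C_{c}^{\infty}(\R^{d})$ and its inverse Fourier transform $K$ is a Schwartz function, in particular $K\in L^{1}(\R^{d})$. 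Setting $\widetilde T:=T\circ M$ with $M$ the operator of convolution with the scalar kernel $K$, so that $\widetilde T$ is the $\R^{d}$ multiplier with symbol $m\rho/\widehat\Psi=m/\widehat\Psi$ (the last equality by $\rho\equiv1$ on $\supp m$, with $0/0:=0$), I would verify the operator identity $T^{1}_{\dis}=\calR\circ\widetilde T\circ\calE$ using $\widehat{\calE f}=\widehat f\,\widehat\Psi$, the fact that $\supp m$ lies in a single fundamental domain (so that only the $l=0$ term of the periodization implicit in $\calR$ survives at a.e.\ frequency), and $\widehat\Phi\equiv1$ on $[-1,1]^{d}\supseteq\supp m$.

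Finally I would assemble the bound: by \eqref{eq:Fourier-ext-restr} the norms $\norm{\calE}_{\ell^{p}(\Z^{d};B_{1})\to L^{p}(\R^{d};B_{1})}$ and $\norm{\calR}_{L^{p}(\R^{d};B_{2})\to\ell^{p}(\Z^{d};B_{2})}$ are at most $C_{d}$ for all $p\in[1,\infty]$ and uniformly in $B_{1},B_{2}$, while convolution with the scalar $L^{1}$ kernel $K$ is bounded on $L^{p}(\R^{d};B_{1})$ with norm $\leq\norm{K}_{L^{1}(\R^{d})}$ for all $p$, uniformly in $B_{1}$, so that $\norm{\widetilde T}_{L^{p}(\R^{d};B_{1})\to L^{p}(\R^{d};B_{2})}\leq\norm{K}_{L^{1}(\R^{d})}\,\norm{T}_{L^{p}(\R^{d};B_{1})\to L^{p}(\R^{d};B_{2})}$; composing with the identity of the previous paragraph gives $\norm{T^{1}_{\dis}}\leq\norm{\calR}\,\norm{\widetilde T}\,\norm{\calE}\lesssim_{d}\norm{T}$, and the reduction step promotes this to arbitrary $q$. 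The one genuinely delicate ingredient is the factorization: the extension kernel $\Psi$ has to be taken as the nonnegative Fejér kernel precisely so that $\calR\calE=\id$ and so that $\calE,\calR$ are bounded with constants independent of $\dim B_{i}$, but then $\widehat\Psi$ is the triangle function rather than a plateau, which is exactly what forces the correction operator $M$; the bookkeeping to keep in mind is that every constant appearing — for $\calE$, $\calR$, $M$, and for the rescaling — depends only on $d$, which is what lets the argument run uniformly over all $p\in[1,\infty]$ and all finite-dimensional $B_{1},B_{2}$.
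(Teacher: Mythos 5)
Your argument is correct, and it reconstructs the MSW sampling principle that the paper simply cites; the skeleton --- reduction to congruence classes modulo $q$, rescaling the symbol to $[-1/2,1/2]^{d}$, and factoring the discrete operator as $\calR\circ(\text{continuous operator})\circ\calE$, with the bound then falling out of \eqref{eq:Fourier-ext-restr} --- is the same as in \cite{MR1888798} and as in the paper's proof of the interpolation analogue, Proposition~\ref{prop:msw-mult-interpolation}. The one step you handle genuinely differently is the Fej\'er factor $\widehat\Psi$. The paper (following \cite[formula~(2.5)]{MR1888798}, see \eqref{eq:204}) uses the identity $\calE([T^q]_{\dis}f)=\tilde T^q(\calE f)$, where $\tilde T^{q}$ carries the $3^{d}$ translates $m((\xi+l)/q)$, $\abs{l}_{\infty}\le 1$: there the factor $\widehat\Psi$ appears on both sides and is never inverted, and $\calR\calE=\id$ completes the factorization at the cost of $3^{d}$. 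You instead observe that $\calR T\calE$ has periodized symbol $m\widehat\Psi$ and repair this by precomposing with convolution against $\FT^{-1}(\text{cutoff}/\widehat\Psi)$, which is legitimate since $\widehat\Psi=\prod_i(1-\abs{\xi_i})_{+}\ge 4^{-d}$ on $[-3/4,3/4]^{d}$, so the correction symbol is $C^{\infty}_{c}$ and the correction operator is convolution with a scalar $L^{1}$ kernel, bounded on $L^{p}(\R^{d};B_{1})$ uniformly in $p$ and in $B_{1}$ by Minkowski's inequality. Both routes yield a constant depending only on $d$ (the ``absolute constant'' in the statement should indeed be read as dimensional). Your version costs an extra smooth-division construction but avoids verifying the periodization identity for $\calE$; the paper's version is slightly more economical ($3^{d}$ translates, no inverse of $\widehat\Psi$) and is stated in exactly the form that transfers to the interpolation-space setting of Proposition~\ref{prop:msw-mult-interpolation}, though your correction operator, being a fixed scalar convolution, would survive that generalization as well.
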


\subsection{Sampling in interpolation spaces}
\label{sec:transference-interpolation}
Proposition~\ref{prop:msw-mult} cannot be applied to the jump space \eqref{eq:jump-space} because it is not a Bochner space.
However, by \eqref{eq:197} it coincides with an interpolation space between Bochner spaces. Therefore, 
 a version of Proposition~\ref{prop:msw-mult} that involves interpolation spaces will be proved.

\begin{proposition}
\label{prop:msw-mult-interpolation}
For every $d\geq 1$, $p\in(1, \infty)$ and $\theta\in(0, 1)$ such that $1\leq p\theta$ there exists a constant $0<C_{p,\theta,d}<\infty$ such that the following holds.
Let $q\in\N$ be a positive integer, let $A_{0}, A_{1}, B$ be finite-dimensional Banach spaces and assume that $(A_0, A_1)$ is a compatible pair.
Let $m:\R^{d}\to L(B,A_0+A_1)$ be a bounded operator-valued function supported on $[-1/2,1/2]^{d}/q$ and denote the associated Fourier multiplier operator by $T$.
Define the discrete multiplier operator $T^{q}_{\dis}$ as in Proposition~\ref{prop:msw-mult}.
Then
\begin{align}
\label{eq:22}
\begin{split}
\norm{T^{q}_{\dis}}_{\ell^{p}(\Z^{d};B)\to[\ell^{\infty}(\Z^{d};A_0),\ell^{\theta p}(\Z^{d};A_1)]_{\theta,\infty}} \le
C_{p,\theta,d} \norm{T}_{L^{p}(\R^{d};B)\to[L^{\infty}(\R^{d};A_0),L^{\theta p}(\R^{d};A_1)]_{\theta,\infty}}.
\end{split}
\end{align}
\end{proposition}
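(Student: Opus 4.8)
The plan is to reduce \eqref{eq:22} to the case $q=1$ by splitting $\Z^{d}$ into the $q^{d}$ residue classes modulo $q$, to reassemble the estimates over these classes by Lemma~\ref{lem:weak-lp-cong-class-split}, and to treat the resulting single-class operator by a transference argument built from the restriction operator $\calR$ and a suitably chosen extension operator.

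The structural fact behind the reduction is the following. Because $\supp m\subseteq\frac1q[-1/2,1/2]^{d}$, the symbol $m^{q}_{\mathrm{per}}$ is $\frac1q\Z^{d}$-periodic, so its Fourier coefficients --- that is, the convolution kernel of $T^{q}_{\dis}$ on $\Z^{d}$ --- are supported on the sublattice $q\Z^{d}$. Hence, writing $f_{r}(n):=f(r+qn)$ for $r\in\{0,\dotsc,q-1\}^{d}$, one has $(T^{q}_{\dis}f)(r+qn)=(Sf_{r})(n)$, where $S$ is the Fourier multiplier operator on $\Z^{d}$ whose symbol on $\T^{d}$ is $\xi\mapsto m(\xi/q)$ (the $q^{\pm d}$ normalization factors cancel upon restricting the kernel to $q\Z^{d}$ and rescaling, and $m(\cdot/q)$ is supported in the fundamental domain $[-1/2,1/2]^{d}$). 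The bijection $n\mapsto r+qn$ from $\Z^{d}$ onto $r+q\Z^{d}$ is measure preserving, so it identifies $[\ell^{\infty}(r+q\Z^{d};A_{0}),\ell^{\theta p}(r+q\Z^{d};A_{1})]_{\theta,\infty}((T^{q}_{\dis}f)|_{r+q\Z^{d}})$ with $[\ell^{\infty}(\Z^{d};A_{0}),\ell^{\theta p}(\Z^{d};A_{1})]_{\theta,\infty}(Sf_{r})$. Applying Lemma~\ref{lem:weak-lp-cong-class-split} to the partition $\Z^{d}=\bigcup_{r}(r+q\Z^{d})$ and using $\sum_{r}\norm{f_{r}}_{\ell^{p}(\Z^{d};B)}^{p}=\norm{f}_{\ell^{p}(\Z^{d};B)}^{p}$, the whole estimate reduces to the single bound
\[
\norm{S}_{\ell^{p}(\Z^{d};B)\to[\ell^{\infty}(\Z^{d};A_{0}),\ell^{\theta p}(\Z^{d};A_{1})]_{\theta,\infty}}
\lesssim_{p,\theta,d}
\norm{T}_{L^{p}(\R^{d};B)\to[L^{\infty}(\R^{d};A_{0}),L^{\theta p}(\R^{d};A_{1})]_{\theta,\infty}}.
\]

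For this I would argue by transference, in effect the case $q=1$. Let $T_{*}$ be the multiplier operator on $\R^{d}$ with symbol $m(\cdot/q)$, and let $\calE_{*}f(x):=\sum_{n\in\Z^{d}}f(n)\Psi_{*}(x-n)$, where $\Psi_{*}$ is a fixed Schwartz function whose Fourier transform is a smooth function equal to $1$ on $[-1/2,1/2]^{d}$ and supported in $[-1,1]^{d}$. Since $\widehat{\calE_{*}f}=\widehat f$ on $[-1/2,1/2]^{d}$ and $m(\cdot/q)$ is supported there, $\widehat{T_{*}\calE_{*}f}=m(\cdot/q)\widehat f$ is supported in the fundamental domain $[-1/2,1/2]^{d}$; composing with $\calR$ then retains only the $l=0$ term of the Poisson periodization, and since $\widehat\Phi\equiv1$ on $[-1/2,1/2]^{d}$ this returns exactly the symbol $m(\cdot/q)$ on $\T^{d}$, so that $S=\calR\circ T_{*}\circ\calE_{*}$. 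Here $\calE_{*}:\ell^{p}(\Z^{d};B)\to L^{p}(\R^{d};B)$ is bounded by $\lesssim_{d}1$ for every $p\in[1,\infty]$, uniformly in the finite-dimensional space $B$ (an elementary estimate using the rapid decay of $\Psi_{*}$, of the same kind as for $\calE$); $\calR$ is bounded $L^{\infty}(\R^{d};A_{i})\to\ell^{\infty}(\Z^{d};A_{i})$ and $L^{\theta p}(\R^{d};A_{i})\to\ell^{\theta p}(\Z^{d};A_{i})$ by $\lesssim_{d}1$ (this is \eqref{eq:Fourier-ext-restr}), hence it is bounded between the corresponding real interpolation spaces by functoriality of the $K$-method; and $\norm{T_{*}}=\norm{T}$ for the norms in the display, because $T_{*}=\delta_{q}\circ T\circ\delta_{1/q}$ with $\delta_{\lambda}h(x)=h(\lambda x)$, where $\delta_{1/q}$ contributes the factor $q^{d/p}$ to the source and $\delta_{q}$ contributes $(q^{-d/(\theta p)})^{\theta}=q^{-d/p}$ to the target, so that these cancel. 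Composing the three bounds gives the displayed estimate for $S$, and hence \eqref{eq:22}.

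The step I expect to demand the most care is the structural identity $(T^{q}_{\dis}f)(r+qn)=(Sf_{r})(n)$ together with the identification of the symbol of $S$ as $m(\cdot/q)$ on $\T^{d}$: this is a matter of keeping track of the $q^{d}$ normalization factors generated by restriction to $q\Z^{d}$ and by rescaling, and of checking that the relations among $m$, $m^{q}_{\mathrm{per}}$ and $m(\cdot/q)$ hold away from the measure-zero sets where the tiling $\sum_{l\in\Z^{d}}m((\xi-l)/q)$ is ambiguous on cube boundaries. Everything else is routine bookkeeping with Fourier supports, the elementary mapping properties of $\calE_{*}$ and $\calR$, and the standard interpolation property of the $K$-method.
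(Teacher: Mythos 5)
Your argument is correct and follows essentially the same route as the paper: split $\Z^{d}$ into residue classes modulo $q$, reassemble with Lemma~\ref{lem:weak-lp-cong-class-split}, observe that the kernel of $T^{q}_{\dis}$ is supported on $q\Z^{d}$ so that each class sees the single discrete multiplier with symbol $m(\cdot/q)$, and transfer that operator to $\R^{d}$ via restriction/extension operators, interpolating the bounds \eqref{eq:Fourier-ext-restr} for $\calR$ by the $K$-method. The one substantive difference is in how the transference identity is set up: the paper keeps the Magyar--Stein--Wainger extension $\calE$ built from the squared sinc $\Psi$, whose Fourier transform is a tent function rather than identically $1$ on the fundamental domain, and therefore must pass through the auxiliary operator $\tilde T^{q}$ with symbol $\sum_{\abs{l}_{\infty}\le 1} m((\xi+l)/q)$ at the cost of a factor $3^{d}$; you instead choose $\widehat{\Psi_{*}}\equiv 1$ on $[-1/2,1/2]^{d}$, which gives the exact factorization $S=\calR\circ T_{*}\circ\calE_{*}$ with no modulation sum, at the price of losing the interpolation property $\calE_{*}f(n)=f(n)$ (which you never use) and of having to check the $\ell^{p}\to L^{p}$ bound for $\calE_{*}$ (elementary, as you note). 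Your explicit verification that the dilation $m(\cdot)\mapsto m(\cdot/q)$ preserves the operator norm from $L^{p}$ into $[L^{\infty}(\R^{d};A_{0}),L^{\theta p}(\R^{d};A_{1})]_{\theta,\infty}$ --- the cancellation $q^{d/p}\cdot q^{-d/p}=1$ --- makes precise a point the paper leaves implicit in \eqref{eq:203} (``by our assumptions''), so this part of your write-up is, if anything, more complete.
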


\begin{proof}[Proof of Proposition~\ref{prop:msw-mult-interpolation}]
Let $\tau_rg(x)=g(x+r)$ and $\delta_qg(x)=g(qx)$ denote the translation and the dilation operator, respectively.
Partitioning $\Z^d$ into congruence classes modulo $q$ and using Lemma~\ref{lem:weak-lp-cong-class-split} we obtain
\begin{multline}
\label{eq:200}
[\ell^{\infty}(\Z^{d};A_0),\ell^{\theta p}(\Z^{d};A_1)]_{\theta,\infty}(T^{q}_{\dis}f)^p\\
\lesssim_{p,\theta}
\sum_{r\in\N_q^d} \Big( [\ell^{\infty}(q\Z^{d}+r;A_0),\ell^{\theta p}(q\Z^{d}+r;A_1)]_{\theta,\infty}(T^{q}_{\dis}f) \Big)^{p},
\end{multline}
where $\N_q:=\{1, \ldots, q\}$. Let $K(x) := \FT^{-1}(m)(x)$ for every $x\in\R^d$, where $\FT^{-1}$ is the inverse Fourier transform on $\R^d$. Recall from \cite{MR1888798} that the kernel of $T^{q}_{\dis}$ is given by the formula
\[
\FT^{-1}(m^{q}_{\mathrm{per}})(x)
=
\begin{cases}
q^{d} K(x), & x\in q\Z^{d},\\
0, & x\in\Z^{d}\setminus q\Z^{d}.
\end{cases}
\]
Let
$ T^q: L^{p}(\R^{d};B)\to[L^{\infty}(\R^{d};A_0),L^{\theta p}(\R^{d};A_1)]_{\theta,\infty}$ be the operator corresponding to the multiplier $m(\xi/q)$, which is supported in $[-1/2, 1/2]^d$.
The kernel of $T^q$ is $q^dK(qx)$ for $x\in\R^d$.
Let us define, as in \cite{MR1888798}, the discrete counterpart of $T^q$ by setting
\[
[T^q]_{\dis}f(x) := \sum_{y\in\Z^d}f(x-y)q^dK(qy).
\]
Then for every $x\in\Z^d$ we have
\begin{align}
\label{eq:202}
\begin{split}
T^{q}_{\dis}f(qx)
&=
\sum_{y\in\Z^d}f(qx-y)\FT^{-1}(m^{q}_{\mathrm{per}})(y)\\
&=
\sum_{y\in\Z^d}\delta_qf(x-y)q^dK(qy)=[T^q]_{\dis}(\delta_qf)(x).
\end{split}
\end{align}
Let $\tilde T^q$ be the Fourier multiplier operator whose multiplier
is
\[
\tilde m(\xi/q)=\sum_{l\in\Z^d: \abs{l}_{\infty}\le1}m((\xi+l)/q).
\]
Then by our assumptions
\begin{align}
\label{eq:203}
\begin{split}
\MoveEqLeft{}
[L^{\infty}(\R^{d};A_0),L^{\theta p}(\R^{d}; A_1)]_{\theta,\infty}(\tilde T^q(f))\\
&\leq 3^{d} \norm{T}_{L^{p}(\R^{d};B)\to[L^{\infty}(\R^{d};A_0),L^{\theta p}(\R^{d};A_1)]_{\theta,\infty}} \norm{f}_{\ell^p(\Z^d;B)}.
  \end{split}
\end{align}
Moreover, by \cite[formula (2.5)]{MR1888798} we have
\begin{align*}
\calE([T^q]_{\dis}(f))(x)=\tilde
T^q(\calE f)(x), \quad \text{ for } \quad x\in\R^d.
\end{align*}
Thus, in view of $\calR\calE=\id$, we get
\begin{align}
\label{eq:204}
[T^q]_{\dis}(f)(x)
=
\calR(\tilde T^q(\calE f))(x),
\quad \text{ for } \quad x\in\Z^d.
\end{align}

Also, by \eqref{eq:Fourier-ext-restr} and the Marcinkiewicz interpolation theorem \cite[Theorem 3.1.2]{MR0482275} we have
\begin{equation}
\label{eq:Fourier-restr-interpolation}
\calR : [L^{\infty}(\R^d;A_{0}),L^{\theta p}(\R^d;A_{1})]_{\theta,\infty}
\to
[\ell^{\infty}(\Z^d;A_{0}),\ell^{\theta p}(\Z^d;A_{1})]_{\theta,\infty}
\end{equation}
with a bound which does not depend on $A_{0},A_{1}$.

Now we can estimate the right-hand side of \eqref{eq:200} by
\begin{align*}
\eqref{eq:200}&=\sum_{r\in\N_q^d} [\ell^{\infty}(\Z^{d};A_0),\ell^{\theta p}(\Z^{d};A_1)]_{\theta,\infty}\big([T^{q}]_{\dis}(\delta_q(\tau_rf))\big)^{p}
&& \text{by \eqref{eq:202}}\\
&=
\sum_{r\in\N_q^d}[\ell^{\infty}(\Z^{d};A_0),\ell^{\theta p}(\Z^{d};A_1)]_{\theta,\infty}\big(\calR\big(\tilde T^{q}\calE(\delta_q(\tau_rf))\big)\big)^p
&& \text{by \eqref{eq:204}}\\
&\lesssim
\sum_{r\in\N_q^d}[L^{\infty}(\R^{d};A_0),L^{\theta p}(\R^{d};A_1)]_{\theta,\infty}\big(\tilde T^{q}\calE(\delta_q(\tau_rf))\big)^p
&& \text{by \eqref{eq:Fourier-restr-interpolation}}\\
&\lesssim \sum_{r\in\N_q^d}
\norm{\calE(\delta_q(\tau_rf))}_{L^p(\R^d;B)}^p
&& \text{by \eqref{eq:203}}\\
&\lesssim \sum_{r\in\N_q^d} \norm{\delta_q(\tau_rf))}_{\ell^p(\Z^d;B)}^p
&& \text{by \eqref{eq:Fourier-ext-restr}}\\
& = \norm{f}_{\ell^p(\Z^d;B)}^p.
\end{align*}
The proof is completed.
\end{proof}

\begin{proof}[Proof of Theorem~\ref{thm:msw-mult-jumps}]
By the monotone convergence theorem it suffices to consider finite sets $\bbI$.
Let $\theta=\min(1/p,1/\rho)$ and represent the jump space as an interpolation space using Lemma~\ref{lem:pisier-xu-space}.
Apply Proposition~\ref{prop:msw-mult-interpolation} with $A_{0} = V^{\infty}_{\bbI \to \C}$, $A_{1} = V^{\theta\rho}_{\bbI \to \C}$, and $B=\C$.
\end{proof}

\printbibliography
\end{document}